\pgfplotsset{compat=1.18}
\newcommand{\tikzexternalenable}{}
\newcommand{\tikzexternaldisable}{}
\newcommand{\tikzsetnextfilename}[1]{}
\newtheorem{theorem}{Theorem}[section]
\newtheorem{remark}[theorem]{Remark}
\newtheorem{lemma}[theorem]{Lemma}
\numberwithin{equation}{section}
\newcommand{\mytitle}{Optimal Damping for the 1D Wave Equation Using a Single
  Damper}
\newcommand{\pp}{\mathfrak{p}}
\newcommand{\vv}{\mathfrak{g}}
\newcommand{\Dint}{\ensuremath{D_{\textnormal{int}}}}
\newcommand{\Dext}{\ensuremath{D_{\textnormal{ext}}}}
\newcommand{\bbR}{\ensuremath{\mathbb{R}}}
\newcommand{\bbC}{\ensuremath{\mathbb{C}}}
\newcommand{\Rn}{\ensuremath{\bbR^{n}}}
\newcommand{\Rm}{\ensuremath{\bbR^{m}}}
\newcommand{\Rl}{\ensuremath{\bbR^{l}}}
\newcommand{\Rnn}{\ensuremath{\bbR^{n \times n}}}
\newcommand{\Rnm}{\ensuremath{\bbR^{n \times m}}}
\newcommand{\Rln}{\ensuremath{\bbR^{l \times n}}}
\newcommand{\Rnk}{\ensuremath{\bbR^{n \times k}}}
\newcommand{\cH}{\ensuremath{\mathcal{H}}}
\newcommand{\cL}{\ensuremath{\mathcal{L}}}
\newcommand{\tran}{^{\operatorname{T}}}
\newcommand{\Ltwo}{\ensuremath{\cL_{2}}}
\newcommand{\Htwo}{\ensuremath{\cH_{2}}}
\newcommand{\Hinf}{\ensuremath{\cH_{\infty}}}
\newcommand{\imag}{\boldsymbol{\imath}}
\DeclareMathOperator{\dif}{d\!}
\DeclarePairedDelimiter{\myparen}{\lparen}{\rparen}
\DeclarePairedDelimiter{\abs}{\lvert}{\rvert}
\DeclarePairedDelimiterXPP{\normtwo}[1]{}{\lVert}{\rVert}{_{2}}{#1}
\DeclarePairedDelimiterXPP{\normF}[1]{}{\lVert}{\rVert}{_{\operatorname{F}}}{#1}
\DeclarePairedDelimiterXPP{\normHinf}[1]{}{\lVert}{\rVert}{_{\Hinf}}{#1}
\DeclarePairedDelimiterXPP{\normHtwo}[1]{}{\lVert}{\rVert}{_{\Htwo}}{#1}
\DeclarePairedDelimiterXPP{\normLinf}[1]{}{\lVert}{\rVert}{_{\Linf}}{#1}
\DeclarePairedDelimiterXPP{\normLtwo}[1]{}{\lVert}{\rVert}{_{\Ltwo}}{#1}
\DeclarePairedDelimiterXPP{\ip}[2]{}{\langle}{\rangle}{}{#1, #2}
\DeclarePairedDelimiterXPP{\ipF}[2]{}{\langle}{\rangle}{_{\operatorname{F}}}{#1, #2}
\DeclarePairedDelimiterXPP{\ipHtwo}[2]{}{\langle}{\rangle}{_{\Htwo}}{#1, #2}
\DeclarePairedDelimiterXPP{\trace}[1]{\operatorname{tr}}{\lparen}{\rparen}{}{#1}
\DeclarePairedDelimiterXPP{\myspan}[1]{\operatorname{span}}{\lbrace}{\rbrace}{}{#1}
\DeclarePairedDelimiterXPP{\vecop}[1]{\operatorname{vec}}{\lparen}{\rparen}{}{#1}
\DeclarePairedDelimiterXPP{\mydiag}[1]{\operatorname{diag}}{\lparen}{\rparen}{}{#1}
\DeclarePairedDelimiterXPP{\real}[1]{\operatorname{Re}}{\lparen}{\rparen}{}{#1}
\definecolor{mplC0}{HTML}{1F77B4}
\definecolor{mplC1}{HTML}{FF7F0E}
\definecolor{mplC2}{HTML}{2CA02C}
\definecolor{mplC3}{HTML}{D62728}
\definecolor{mplC4}{HTML}{9467BD}
\definecolor{mplC5}{HTML}{8C564B}
\definecolor{mplC6}{HTML}{E377C2}
\definecolor{mplC7}{HTML}{7F7F7F}
\definecolor{mplC8}{HTML}{BCBD22}
\definecolor{mplC9}{HTML}{17BECF}
\let\le\leqslant%
\let\geq\geqslant%
\definecolor{lime}{HTML}{A6CE39}
\DeclareRobustCommand{\orcidicon}{
	\begin{tikzpicture}
	\draw[lime, fill=lime] (0,0)
	circle [radius=0.16]
	node[white] {{\fontfamily{qag}\selectfont \tiny ID}};
	\draw[white, fill=white] (-0.0625,0.095)
	circle [radius=0.007];
	\end{tikzpicture}
	\hspace{-2mm}
}
\xdef\csname orcid\x\endcsname{\noexpand\href{https://orcid.org/\csname orcidauthor\x\endcsname}
    {\noexpand\orcidicon}}
\title{\mytitle{}}
\author[1]{Petar~Mlinari\'c\orcidA{}}
\author[2]{Serkan~Gugercin\orcidB{}\thanks{The work of Gugercin is based upon
work supported by the National Science Foundation under Grant No.\ DMS-241141.}}
\author[3]{Zoran~Tomljanovi\'c\orcidC{}}
\affil[1]{
  Department of Mathematics,
  Faculty of Science,
  University of Zagreb,
  Zagreb,
  Croatia
  (\texttt{petar.mlinaric@math.hr})
}
\affil[2]{
  Department of Mathematics and
  Division of Computational Modeling and Data Analytics,
  Academy of Data Science,
  Virginia Tech,
  Blacksburg,
  VA,
  USA
  (\texttt{gugercin@vt.edu})
}
\affil[3]{
  School of Applied Mathematics and Informatics,
  J.\ J.\ Strossmayer University of Osijek,
  Osijek,
  Croatia
  (\texttt{ztomljan@mathos.hr})
}
\begin{document}

\maketitle

\begin{abstract}
  Vibrational structures are susceptible to catastrophic failures or structural damages when external forces induce resonances or repeated unwanted oscillations. One common mitigation strategy is to use dampers to suppress these disturbances. This leads to the problem of finding optimal damper viscosities and positions for a given vibrational structure. Although extensive research exists for the case of finite-dimensional systems, optimizing damper positions remains challenging due to its discrete nature. To overcome this, we introduce a novel model for the damped wave equation (at the PDE level) with a damper of viscosity $\mathfrak{g}$ at position $\mathfrak{p}$ and develop a system-theoretic input/output-based analysis in the frequency domain. In this system-theoretic formulation, while we consider average displacement as the output, for input (forcing), we analyze two separate cases, namely, the uniform and boundary forcing. For both cases, explicit formulas are derived for the corresponding transfer functions, parametrized by $\mathfrak{p}$ and $\mathfrak{g}$. This explicit parametrization by $\mathfrak{p}$ and $\mathfrak{g}$ facilitates analyzing the optimal damping problem (at the PDE level) using norms such as the $\mathcal{H}_2$ and $\mathcal{H}_\infty$ norms. We also examine limiting cases, such as when the viscosity is very large or when no external damping is present. To illustrate our approach, we present numerical examples, compare different optimization criteria, and discuss the impact of damping parameters on the damped wave equation.
\end{abstract}

\keywords{%
  damping
optimization,
wave equation, $\Htwo$ norm, $\Hinf$  norm, transfer function

}

\section{Introduction}%
\label{sec:intro}

\begin{subequations}\label{eq:wave-pde-dirac}
  Consider the damped wave equation over a string of
  length $\ell > 0$,
  with damping coefficient $d > 0$,
  stiffness $k > 0$,
  input (forcing) $u(t) \in \Rm$,
  input vector $b(x) \in \Rm$, and
  a damper of viscosity (gain) $\vv \geq 0$ at position $\pp \in (0, \ell)$:
  \begin{align}\label{eq:wave-pde-dirac-eq}
    q_{tt}(x, t)
    + (d + \vv \delta_{\pp}(x)) q_t(x, t)
    = k q_{xx}(x, t) + b(x)\tran u(t),
    \qquad
    x \in (0, \ell),\
    t > 0,
  \end{align}
  with boundary conditions, for some $b_L, b_R \in \Rm$,
  \begin{align}
    q(0, t)
    = b_L\tran u(t), \quad
    q(\ell, t)
    = b_R\tran u(t),
    \qquad
    t > 0,
  \end{align}
  and initial conditions, for smooth enough $q_0, q_1$,
  \begin{align}
    q(x, 0)
    = q_0(x), \quad
    q_t(x, 0)
    = q_1(x),
    \qquad
    x \in [0, \ell],
  \end{align}
\end{subequations}
where $\delta_{\pp}$ is the Dirac distribution concentrated at $\pp$.
We leave the details of how we handle the Dirac distribution to
\Cref{sec:coupled}, and refer the reader to~\cite{BamRT82,Tuc98,AmmT00} for
further background.
In~\eqref{eq:wave-pde-dirac}, we refer to $d$ as \emph{internal damping} and
$\vv \delta_{\pp}$ as \emph{external damping}.
In this work, we address the problem of optimizing the external damping
in~\eqref{eq:wave-pde-dirac}, i.e.,
the damper position $\pp$ and viscosity $\vv$,
for suitably chosen optimization criteria.

The goal of damping optimization is to avoid
unwanted oscillations that may cause damage,
undesirable behavior, or
collapse of the considered structure.
Generally, damping optimization refers to minimizing the influence of the input
(viewed as a disturbance) on the output.
Different objective functions (criteria) that quantify this influence could be
considered.
Prior work on finite-dimensional systems, which we describe next,
was our motivation for considering the damped wave
equation~\eqref{eq:wave-pde-dirac-eq},
together with different optimization criteria.

\subsection{Finite-dimensional Linear Vibration Systems}

Consider the linear vibration system described by the system of differential
equations
\begin{align}
  \label{eq:finitemdk}
  M \ddot{x}(t) + D \dot{x}(t) + K x(t) & = B w(t),                    \\*
  y(t)                                  & = C_1 x(t) + C_2 \dot{x}(t),
\end{align}
where $M, K \in \Rnn$ are symmetric positive definite matrices, denoting the
mass and stiffness matrices, respectively.
The state variables that model structure displacements are described by the
vector $x(t) \in \Rn$.
The primary excitation is represented by a time-dependent vector $w(t) \in \Rm$
and matrix $B \in \Rnm$.
The output vector $y(t) \in \Rl$ represents a quantity of interest that is
obtained from the state and velocity vectors via matrices $C_1, C_2 \in \Rln$.
The damping matrix $D = \Dint + \Dext$,
where $\Dint \in \Rnn$ and $\Dext \in \Rnn$,
represent internal damping and external damping matrix, respectively.
The internal damping can be modeled, e.g., as a small multiple of the critical
damping or a small multiple of proportional damping; for more details, see,
e.g.,~\cite{BeaGT20,BenTT11,Ves11}.

Typically, the main focus in damping optimization is on the external damping
matrix $\Dext$,
which models damping positions and viscosities.
In particular, one chooses
$\Dext = U \mydiag{\vv_1, \vv_2, \ldots, \vv_k} U\tran$,
where the non-negative entries $\vv_i$, $i = 1, \ldots, k$, represent the
friction coefficients of the dampers, usually called gains or viscosities, and
the matrix $U \in \Rnk$ encodes the damping positions.
For example, in the case of a simple $n$-mass oscillator with a grounded damper
attached to the $i$th mass, one obtains $U = e_i$,
where $e_i$ is the $i$th canonical vector.
Note that discretizing the damped wave equation,
modeled by~\eqref{eq:wave-pde-dirac},
leads to a finite-dimensional linear vibration system that represents an
$n$-mass oscillator with a single damping position
(more details on the discretization are given in \Cref{sec:discrete}).

One can consider excitation by a given external force
or excitation modeled by some input disturbances.
In~\cite{Ves11,TruTV15,Tom23}, authors considered periodic excitation force and
criteria based on average displacement amplitude and average energy amplitude.
On the other hand, for the input-state-output representation, a criterion can be
defined by using standard system norms, such as the $\Htwo$ or the $\Hinf$
system norm, see,
e.g.,~\cite{BlaCGM12,BeaGT20,TomV20,NakTT19,TomBG18}.
A detailed overview of system energies can be found in the
books~\cite{Ant05,Ves11,ZhoDG96}.

For all the above-mentioned criteria for damping optimization, the objective
function is non-convex and usually with many local minima, and therefore, the
associated optimization problem typically requires a large number of objective
function evaluations.
Standard approaches for damping optimization are mainly focused on the
efficient optimization of viscosities for a large number of different damper
positions.
Specifically, when an effective approach for optimizing viscosities is
available, the optimal damping positions can be identified through a
comprehensive search of the full space.
The alternative approach
would be to use some recent results that are based on heuristic or
approximation-based approaches, see, e.g.~\cite{BenTT11Prag,KanPTT19}.
However, overall, efficient optimization of damping positions is still an open
problem.

\subsection{Prior Work on Optimal Damping for the Wave Equation}

On the PDE level, analysis of the damped wave equation was considered, e.g.,
in the papers~\cite{BamRT82,Tuc98,AmmT00,CoxH08,CoxEmbree11}
where authors modeled damping as nonuniform viscous damping
without internal damping term.
In the case of pointwise damping,
the position of the damping was modeled by the Dirac distribution,
which we also use in this paper.

\subsection{Prior Work on Finite-dimensional Homogeneous Systems}

Furthermore, depending on the particular applications,
different systems can be considered:
homogeneous systems without an external force where the system
is excited by initial conditions and
inhomogeneous systems under the influence of external force.

A principal objective in the homogeneous case is to determine the optimal
damping of the system to ensure optimal evanescence of deviations from its
equilibrium.
In that case, criteria based on the underlying eigenvalue problems
play an important role in understanding system stability, robustness, and
sensitivity.
See, e.g.,~\cite{FreL99,JakMTU23,MorMTT23} where authors
considered spectral abscissa and frequency-based damping criteria.
Moreover, energy-based criteria are also widely used;
in particular, criteria based on the average total energy were considered for
the finite-dimensional case, e.g., in
the~\cite{BenTT10,BenTT11,CoxNRV04,TruV09,Ves11}, while
energy for the wave equation was considered, e.g.,
in~\cite{Mun09,MunPP06}.

\subsection{Outline}

To overcome the challenges present in the discrete approach to the damping
optimization problem,
such as the full space-search to determine optimal positions and viscosities,
this paper models the damper position with the Dirac distribution in the wave
equation.
This, in turn, allows for simultaneous analysis for variations in damping position and
viscosity.

This paper has several novel contributions to analyze and optimize the
damped wave equation.
\begin{enumerate}
  \item We derive a novel model for the damped wave equation with a damper of
        viscosity $\vv$ at position $\pp$.
  \item We consider analysis for the two important forcing strategies in the
        wave equation, achieved by an appropriate systems-theoretic input-output
        modeling.
        The first is the uniform forcing, and the second analyzes the boundary
        forcing.
  \item For both forcing cases, we develop the analysis in the frequency domain
        and derive the associated transfer function, which is explicitly
        parameterized by the viscosity $\vv$ and position $\pp$.
        An essential benefit in both forcing cases is that our result
        simultaneously analyzes the position and viscosity variation in the
        damped wave equation, and thus facilitates analyzing the
        optimal damping problem (at the continuous PDE level) using
        systems-theoretic measures such as the $\Htwo$ and $\Hinf$ norms.
  \item Several important limit cases, including those without
        internal damping, are considered, leading to explicit formulas in terms
        of the wave equation parameters and providing further insights.
  \item We also provide a comparison between finite and infinite-dimensional
        models in the context of damping optimization.
        Although the objective functions—including standard system norms—exhibit
        similar behavior as the damping position is varied in both models, the
        discretized problem inherently provides only an approximation of the
        true objective function.
        In contrast, our approach avoids this issue by modeling the damping
        position as a continuous variable, thereby preserving accuracy and
        enabling efficient, simultaneous optimization of both viscosity and
        damping position.
\end{enumerate}
The rest of the paper is organized as follows.
In \Cref{sec:wave}, we establish notation and
state the general problem of optimal damping for the one-dimensional wave
equation using one damper.
Then in \Cref{sec:uniform,sec:boundary} we analyze the cases of uniform
forcing and boundary forcing, respectively, deriving, in both cases, the
corresponding transfer functions together with the limiting cases, and
presenting numerical results illustrating the analysis.
We give conclusions in \Cref{sec:conc}.

\section{Wave Equation}%
\label{sec:wave}

The goal of this section is first to remodel the damped wave
equation~\eqref{eq:wave-pde-dirac} with the Dirac distribution as a coupled
system and then establish the basics of the frequency domain analysis we will
use in the rest of the manuscript.

\subsection{Transforming the Wave Equation with a Dirac Distribution}%
\label{sec:coupled}

Following the discussion in, e.g.,~\cite{BamRT82,Tuc98,AmmT00,CoxH08},
we first re-model the wave equation~\eqref{eq:wave-pde-dirac} with the Dirac
distribution as two coupled wave equations without Dirac distributions.
The starting point for this analysis (see, e.g.,~\cite{AgaV85}) is to interpret the
damping term $d + \vv \delta_{\pp}$ in~\eqref{eq:wave-pde-dirac} in the
integral form:
\begin{subequations}
  \begin{align}
    \label{eq:wave-eq-1}
    \int_{x_1}^{x_2}
    \myparen*{
      q_{tt}(x, t)
      + d q_t(x, t)
    }
    \dif{x}
     & =
    k q_{x}(x_2, t)
    - k q_{x}(x_1, t)
    +
    \int_{x_1}^{x_2}
    b(x)\tran u(t)
    \dif{x},
    \\
    \intertext{for $x_1, x_2 \in (0, \pp)$ or $x_1, x_2 \in (\pp, \ell)$ and}
    \label{eq:wave-eq-2}
    \int_{x_1}^{x_2}
    \myparen{
      q_{tt}(x, t)
      + d q_t(x, t)
    }
    \dif{x}
    + \vv q_t(\pp, t)
     & =
    k q_{x}(x_2, t)
    - k q_{x}(x_1, t)
    +
    \int_{x_1}^{x_2}
    b(x)\tran u(t)
    \dif{x},
  \end{align}
\end{subequations}
for $x_1 \in (0, \pp)$ and $x_2 \in (\pp, \ell)$.
Therefore, from~\eqref{eq:wave-eq-1} we have
(e.g., by differentiating with respect to $x_2$ and setting $x_2 = x$)
\begin{align*}
  q_{tt}(x, t)
  + d q_t(x, t)
   & =
  k q_{xx}(x, t)
  + b(x)\tran u(t),
\end{align*}
for $x \in (0, \pp) \cup (\pp, \ell)$ and $t > 0$.
Then, in equation~\eqref{eq:wave-eq-2},
letting $x_1 \to \pp^-$ and $x_2 \to \pp^+$ gives
\begin{equation}\label{eq:wave-eq-jump}
  \vv q_t(\pp, t)
  = k q_{x}\myparen*{\pp^+, t}
  - k q_{x}\myparen*{\pp^-, t},
\end{equation}
where we denote
\begin{equation*}
  q_{x}\myparen*{\pp^-, t}
  = \lim_{x \to \pp^-} q_{x}(x, t)
  \quad \text{and} \quad
  q_{x}\myparen*{\pp^+, t}
  = \lim_{x \to \pp^+} q_{x}(x, t).
\end{equation*}
The condition~\eqref{eq:wave-eq-jump},
together with continuity of $q$,
gives us the interface conditions:
\begin{align*}
  q\myparen*{\pp^+, t}
  - q\myparen*{\pp^-, t}
   & = 0,                         \\
  q_{x}\myparen*{\pp^+, t}
  - q_{x}\myparen*{\pp^-, t}
   & = \frac{\vv}{k} q_t(\pp, t).
\end{align*}
Therefore, we have that~\eqref{eq:wave-pde-dirac} is equivalent to the coupled
system
\begin{subequations}\label{eq:wave-pde}
  \begin{align}
    \label{eq:wave-pde-eq}
    q_{tt}(x, t)
    + d q_t(x, t)
     & =
    k q_{xx}(x, t)
    + b(x)\tran u(t),
     & x \in (0, \pp) \cup (\pp, \ell),\ t > 0, \\*
    q\myparen*{\pp^-, t}
     & = q\myparen*{\pp^+, t},
     & t > 0,                                   \\*
    \frac{\vv}{k} q_t(\pp, t)
     & =
    q_x\myparen*{\pp^+, t}
    - q_x\myparen*{\pp^-, t},
     & t > 0,                                   \\*
    \label{eq:wave-pde-bl}
    q(0, t)
     & = b_L\tran u(t),
     & t > 0,                                   \\*
    \label{eq:wave-pde-br}
    q(\ell, t)
     & = b_R\tran u(t),
     & t > 0,                                   \\*
    q(x, 0)
     & = q_0(x),
     & x \in [0, \ell],                         \\*
    q_t(x, 0)
     & = q_1(x),
     & x \in [0, \ell].
  \end{align}
\end{subequations}

\subsection{Frequency Domain Analysis and Transfer Functions}

We will follow a system-theoretic perspective in our analysis for analyzing the
optimal damping problem for~\eqref{eq:wave-pde} by defining an input-output
pair,
transforming~\eqref{eq:wave-pde} to the frequency domain via Laplace transform,
deriving frequency-domain transfer functions, and
then by defining appropriate system norms to be minimized for the damping
optimization problem.

As is common when deriving transfer function and defining system norms,
we assume zero initial conditions, and
apply the Laplace transform to~\eqref{eq:wave-pde} to obtain
(see also~\cite{CurM09})
\begin{subequations}\label{eq:wave-pde-laplace}
  \begin{align}
    \myparen*{s^2 + d s} Q(x, s)
     & =
    k Q_{xx}(x, s)
    + b(x)\tran U(s),
     & x \in (0, \pp) \cup (\pp, \ell), \\*
    Q\myparen*{\pp^-, s}
     & =
    Q\myparen*{\pp^+, s},               \\*
    \frac{\vv s}{k} Q(\pp, s)
     & =
    Q_x\myparen*{\pp^+, s}
    - Q_x\myparen*{\pp^-, s},           \\*
    Q(0, s)
     & = b_L\tran U(s),                 \\*
    Q(\ell, s)
     & = b_R\tran U(s),
  \end{align}
\end{subequations}
where $s \in \bbC$ is the Laplace variable and
$Q, U$ are Laplace transforms of $q, u$, respectively
(assuming that the Laplace transforms exist).

Let us define \emph{the displacement transfer function} $G$, i.e.,
a function such that
\begin{equation}\label{eq:g}
  Q(x, s) = G(x, s) U(s).
\end{equation}
Thus, $G$ maps input $U$ to displacement $Q$ in the Laplace domain.

Next, let the output $y(t) \in \Rl$ be a linear function of
displacement $q(\cdot, t)$ and velocity $q_t(\cdot, t)$
(we make a specific choice in~\Cref{sec:uniform,sec:boundary}).
Let $Y$ denote the Laplace transform of $y$.
We then define \emph{the output transfer function} $H$, i.e.,
a function such that
\begin{equation}\label{eq:h}
  Y(s) = H(s) U(s).
\end{equation}
In other words, $H$ maps the input $U$ to the output $Y$ in the Laplace domain.

In~\Cref{sec:uniform,sec:boundary}, we consider two natural cases for the input,
the uniform and boundary forcing.
Then, using the average displacement as the output,
we derive explicit formulae for the displacement and output transfer functions
$G$ and $H$,
which show how the damper position $\pp$ and viscosity $\vv$ influence the
dynamics.
This explicit parametrization will then facilitate analyzing the damping
optimization problem.

The damping optimization problem will require an appropriate error measure.
In the system-theoretic formulation that we consider,
the two common norms are the $\Htwo$ and $\Hinf$ norms.
For a transfer function $H$, these norms are defined as
\begin{equation} \label{eq:H2Hinf}
  \normHtwo{H}
  = \myparen*{
    \frac{1}{2 \pi}
    \int_{-\infty}^{\infty}
    \normF{H(\imag \omega)}^2
    \dif{\omega}
  }^{1/2}, \quad
  \normHinf{H}
  = \sup_{\omega \in \bbR} \,
  \normtwo{H(\imag \omega)},
\end{equation}
where $\imag$ denotes the imaginary unit.

Although not explicitly stated in~\eqref{eq:H2Hinf},
these norms (optimality criteria) depend on the optimization parameters $\pp$
and $\vv$ as the underlying transfer function depend on these parameters.
Then, the main damping optimization problems we consider are
\emph{minimizing $\normHtwo{H}$ or $\normHinf{H}$ with respect to damper
  position $\pp$ and viscosity $\vv$.}

In the following sections,
when deriving the displacement transfer function $G$ and the output transfer
function $H$,
we use SymPy~\cite{MeuSP+17} to assist with some symbolic computations.
The code is available~\cite{MliT25}.

\subsection{Finite-dimensional Approximation to the Wave Equation}%
\label{sec:discrete}

As we mentioned in~\Cref{sec:intro},
one approach to solving the damping optimization problem
would be to discretize the underlying PDE and
then try to solve the optimization problem
using the resulting finite-dimensional approximation.
We highlighted some of the disadvantages for that optimization procedure,
such as the need for a comprehensive search of the entire space.
However, since we want to present numerical comparisons to the discretized
finite-dimensional approach
(as opposed to the continuous PDE level analysis we develop here),
we will go over this semi-discretization step.

A standard approach would be to discretize the damped wave
equation~\eqref{eq:wave-pde-eq},
without external damping,
using finite differences over a uniformly subdivided domain
$0 = x_0 < x_1 < \cdots < x_n = \ell$
to obtain the set of ordinary differential equations
\begin{equation*}
  \ddot{q}_i(t)
  + d \dot{q}_i(t)
  + k \frac{-q_{i - 1}(t) + 2 q_i(t) - q_{i + 1}(t)}{h^2}
  = b(x_i)\tran u(t),
\end{equation*}
where $h = \frac{\ell}{n}$.
After taking the boundary conditions into account,
one obtains a second-order finite-dimensional system
\begin{equation*}
  M \ddot{q}(t)
  + \Dint \dot{q}(t)
  + K q(t)
  = B u(t),
\end{equation*}
where $M, \Dint, K \in \Rnn$ and $B \in \Rn$.
Then, adding the the damper at position $x_k$ would be modeled as a modification
to the damping term, leading to the parametrized dynamics
\begin{equation*}
  M \ddot{q}(t)
  + \myparen*{\Dint + \frac{\vv}{h} e_k e_k\tran} \dot{q}(t)
  + K q(t)
  = B u(t),
\end{equation*}
where $e_k$ denotes the $k$th canonical vector.
Thus we obtain a model of the form~\eqref{eq:finitemdk}.
Then, by varying the viscosity $\vv$ and the damping position $x_k$ at all
discrete positions,
one can try to compute the optimal viscosity $\vv$ and the optimal position
$x_k$.
We finally note that a discretization similar to what we described above was
considered, for example, in~\cite{CoxEmbreeHok12,Ves11},
to study the vibrational analysis of a string.

\section{Uniform Forcing}%
\label{sec:uniform}

In this section, we consider the damped wave equation with a single uniform
forcing (as the input $u(t)$) together with homogeneous Dirichlet boundary
conditions.
This corresponds to choosing
$b \equiv 1$ in~\eqref{eq:wave-pde-eq} and
$b_L = b_R = 0$ in~\eqref{eq:wave-pde-bl} and~\eqref{eq:wave-pde-br},
respectively.
We choose the average displacement as the output, i.e., we choose $y$ as
\begin{equation}\label{eq:output-avg-disp-uni}
  y(t) = \frac{1}{\ell} \int_0^\ell q(x, t) \dif{x}.
\end{equation}
In the remainder of the section,
we first derive the transfer functions $G$ and $H$
corresponding to this choice of forcing and output (\Cref{sec:uniformtf}),
analyze the limiting cases (\Cref{sec:uniformlimiting}),
and then employ these expressions to numerically investigate the damping
optimization problem
for the $\Htwo$ and $\Hinf$ measures (\Cref{sec:uniformresults}).

\subsection{Deriving the Transfer Functions for the Case of Uniform Forcing}%
\label{sec:uniformtf}

For the choices of $b \equiv 1$, and $b_L = b_R = 0$,
together with the output $y(t)$ in~\eqref{eq:output-avg-disp-uni},
the Laplace domain system~\eqref{eq:wave-pde-laplace} becomes
\begin{subequations}\label{eq:uniform-laplace}
  \begin{align}
    \label{eq:uniform-laplace-ode}
    \myparen*{s^2 + d s} Q(x, s)
     & =
    k Q_{xx}(x, s)
    + U(s),
     & x \in (0, \pp) \cup (\pp, \ell),              \\*
    \label{eq:uniform-laplace-mid1}
    Q\myparen*{\pp^-, s}
     & =
    Q\myparen*{\pp^+, s},                            \\*
    \label{eq:uniform-laplace-mid2}
    \frac{\vv s}{k} Q(\pp, s)
     & =
    Q_x\myparen*{\pp^+, s}
    - Q_x\myparen*{\pp^-, s},                        \\*
    \label{eq:uniform-laplace-bc1}
    Q(0, s)
     & = 0,                                          \\*
    \label{eq:uniform-laplace-bc2}
    Q(\ell, s)
     & = 0,                                          \\*
    \label{eq:uniform-laplace-output}
    Y(s)
     & = \frac{1}{\ell} \int_0^\ell Q(x, s) \dif{x},
  \end{align}
\end{subequations}
for $s \in \bbC$ such that~\eqref{eq:uniform-laplace-ode} has a unique solution.
The following theorem gives explicit expressions for the transfer functions $G$
and $H$ of the above problem.
Before we state the result, we want to clarify a notational choice.
To simplify the presentation,
we define several quantities such as $z, \beta_1, \beta_2$ etc.\
in~\eqref{eq:variables}.
These quantities are indeed functions; for example, $z$ is a function of $s$.
However, instead of explicitly writing $z(s)$,
we simply write $z$ so that the expressions are much easier to read.
\begin{theorem}\label{thm:uniform-g-h}
  Consider the boundary value problem~\eqref{eq:uniform-laplace}.
  The displacement and output transfer functions are given by
  \begin{equation} \label{eq:Guniform}
    G(x, s)
    =
    \frac{1}{s^2 + d s}
    \cdot
    \begin{cases}
      1
      - \cosh(z x)
      + \frac{\beta_1}{\eta} \sinh(z x),
       & x \le \pp, \\
      1
      - \cosh(z (\ell - x))
      + \frac{\beta_2}{\eta} \sinh(z (\ell - x)),
       & x > \pp,
    \end{cases}
  \end{equation}
  and
  \begin{equation}\label{eq:Huniform}
    H(s)
    =
    \frac{1}{s^2 + d s}
    \myparen*{1 - \frac{\gamma}{z \ell \eta}},
  \end{equation}
  where
  \begin{subequations}\label{eq:variables}
    \begin{align}
      \label{z}
      z
       & = \sqrt{(s^2 + d s) / k},                   \\
      \label{beta1}
      \beta_1
       & =
      2 k z \sinh^2\myparen*{\tfrac{1}{2} z \ell}
      + 2 \vv s \sinh(z (\ell - \pp))
      \sinh^2\myparen*{\tfrac{1}{2} z \pp},          \\
      \label{beta2}
      \beta_2
       & =
      2 k z \sinh^2\myparen*{\tfrac{1}{2} z \ell}
      + 2 \vv s \sinh(z \pp)
      \sinh^2\myparen*{\tfrac{1}{2} z (\ell - \pp)}, \\
      \label{gamma}
      \gamma
       & =
      4 k z \sinh^2\myparen*{\tfrac{1}{2} z \ell}
      + 8 \vv s
      \sinh\myparen*{\tfrac{1}{2} z \ell}
      \sinh\myparen*{\tfrac{1}{2} z \pp}
      \sinh\myparen*{\tfrac{1}{2} z (\ell - \pp)},
      \text{ and}                                    \\
      \label{eta}
      \eta
       & =
      k z \sinh(z \ell)
      + \vv s \sinh(z \pp) \sinh(z (\ell - \pp)).
    \end{align}
  \end{subequations}
\end{theorem}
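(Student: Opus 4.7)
\medskip

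The plan is to treat \eqref{eq:uniform-laplace-ode} as a pair of ordinary differential equations in the spatial variable $x$, solve each piece by the standard constant-coefficient recipe, and then glue the two pieces together with the four boundary and interface conditions \eqref{eq:uniform-laplace-mid1}, \eqref{eq:uniform-laplace-mid2}, \eqref{eq:uniform-laplace-bc1}, and \eqref{eq:uniform-laplace-bc2}. With $z$ as defined in \eqref{z}, each side of the interface satisfies $Q_{xx} - z^2 Q = -U/k$, which has constant particular solution $U/(s^2 + d s)$ and homogeneous solutions spanned by $\cosh(z\cdot)$ and $\sinh(z\cdot)$. On $(0,\pp)$ I would parametrize the homogeneous part using $\cosh(zx)$ and $\sinh(zx)$ so that the Dirichlet condition at $x=0$ immediately fixes the $\cosh$-coefficient. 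Symmetrically, on $(\pp,\ell)$ I would use $\cosh(z(\ell-x))$ and $\sinh(z(\ell-x))$ so that the Dirichlet condition at $x=\ell$ fixes the corresponding $\cosh$-coefficient. After this step, $Q(x,s)$ has the form $\tfrac{1}{s^2+ds}(1 - \cosh(z x)) + B(s)\sinh(zx)$ on $(0,\pp)$ and $\tfrac{1}{s^2+ds}(1 - \cosh(z(\ell-x))) + D(s)\sinh(z(\ell-x))$ on $(\pp,\ell)$, which already matches the structural shape of \eqref{eq:Guniform}.

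Next I would impose the two interface conditions to pin down $B(s)$ and $D(s)$. Continuity \eqref{eq:uniform-laplace-mid1} and the jump \eqref{eq:uniform-laplace-mid2} give a $2\times 2$ linear system in $B,D$ whose right-hand side depends on $U(s)/(s^2+ds)$. Solving by Cramer's rule, I would read off $\beta_1/\eta$ and $\beta_2/\eta$ as the coefficients appearing in \eqref{eq:Guniform}. The identification of the denominator with $\eta$ from \eqref{eta} and of the numerators with $\beta_1, \beta_2$ from \eqref{beta1}, \eqref{beta2} is exactly where the hyperbolic identities enter: the raw Cramer expressions involve products like $\sinh(z\pp)\cosh(z(\ell-\pp)) + \cosh(z\pp)\sinh(z(\ell-\pp)) = \sinh(z\ell)$, together with $\cosh(y)-1 = 2\sinh^2(y/2)$, and repeated use of such identities is what brings the formulas into the compact form stated in \eqref{eq:variables}. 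This is the step I expect to be the main obstacle, since several competing trigonometric-hyperbolic rewrites are possible and only one lands on the precise expressions for $\beta_1, \beta_2, \eta$; this is also where I would lean on SymPy, as the authors note, to verify the simplifications.

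For the output transfer function, I would start from \eqref{eq:uniform-laplace-output} and substitute the piecewise formula \eqref{eq:Guniform} for $G$, reducing $H(s)$ to a sum of elementary integrals of $1$, $\cosh$, and $\sinh$ over $(0,\pp)$ and $(\pp,\ell)$. The integrals of $1$ contribute $\ell$, which will produce the leading $1$ inside the parentheses of \eqref{eq:Huniform} after dividing by $\ell$. The remaining terms produce a single rational combination with denominator $z\ell\eta$; collecting its numerator and applying once more the half-angle identity $\cosh(y)-1 = 2\sinh^2(y/2)$ (together with the factorization $\sinh(z\ell) = 2\sinh(\tfrac{1}{2}z\ell)\cosh(\tfrac{1}{2}z\ell)$ implicit in the splitting by $\pp$) should yield exactly the expression $\gamma$ in \eqref{gamma}. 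Once $\beta_1,\beta_2,\eta$ are in hand, this integration step is mechanical, but as before the nontrivial work is recognizing the clean closed form of the numerator. Finally, since the constructed $Q$ satisfies every equation in \eqref{eq:uniform-laplace} by construction and \eqref{eq:uniform-laplace-ode} has a unique solution at the values of $s$ under consideration (equivalently, $\eta(s)\neq 0$), the derived $G$ and $H$ are the unique transfer functions and the theorem follows.
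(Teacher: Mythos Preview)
Your proposal is correct and mirrors the paper's proof almost exactly: the paper likewise writes the general ODE solution in the piecewise $\cosh/\sinh$ basis centered at $0$ on $(0,\pp)$ and at $\ell$ on $(\pp,\ell)$, reads off $A_1=A_2=-1$ from the Dirichlet conditions, solves the remaining linear system for $B_1,B_2$ (symbolically), and then applies the half-angle identities $\cosh(y)-1=2\sinh^2(y/2)$ and $\sinh(z\ell)=2\sinh(\tfrac12 z\ell)\cosh(\tfrac12 z\ell)$ to obtain $\beta_1,\beta_2,\eta$ and, after integrating $G$, the numerator $\gamma$. The only cosmetic difference is that the paper frames the boundary/interface conditions as a $4\times4$ system before immediately eliminating $A_1,A_2$, whereas you go straight to the $2\times2$ system.
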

\begin{proof}
  From the ODE~\eqref{eq:uniform-laplace-ode},
  we conclude that the solution is of the form
  \begin{equation*}
    Q(x, s)
    =
    \frac{U(s)}{s^2 + d s}
    \cdot
    \begin{cases}
      1
      + A_1 \cosh(z x)
      + B_1 \sinh(z x),
       & x \le \pp, \\
      1
      + A_2 \cosh(z (\ell - x))
      + B_2 \sinh(z (\ell - x)),
       & x > \pp.
    \end{cases}
  \end{equation*}
  From the boundary
  conditions~\eqref{eq:uniform-laplace-mid1}--\eqref{eq:uniform-laplace-bc2},
  it follows that
  \begin{align*}
    1 + A_1
     & = 0,                      \\
    1 + A_2
     & = 0,                      \\
    1
    + \cosh(z \pp) A_1
    + \sinh(z \pp) B_1
     & =
    1
    + \cosh(z (\ell - \pp)) A_2
    + \sinh(z (\ell - \pp)) B_2, \\
    \vv s
    \myparen*{
      1
      + \cosh(z \pp) A_1
      + \sinh(z \pp) B_1
    }
     & =
    -k z
    \bigl(
    \sinh(z (\ell - \pp)) A_2
    + \cosh(z (\ell - \pp)) B_2  \\*
     & \hspace{3.5em}
    + \sinh(z \pp) A_1
    + \cosh(z \pp) B_1
    \bigr).
  \end{align*}
  Solving the above $4 \times 4$ linear system via symbolic computations,
  we obtain
  \begin{align*}
    A_1
     & =
    -1,  \\
    B_1
     & =
    \frac{
      k z \cosh(z \ell)
      - k z
      + \vv s \sinh(z (\ell - \pp)) \cosh(z \pp)
      - \vv s \sinh(z (\ell - \pp))
    }{
      k z \sinh(z \ell)
      + \vv s \sinh(z \pp) \sinh(z (\ell - \pp))
    },   \\
    A_2
     & =
    -1,  \\
    B_2
     & =
    \frac{
      k z \cosh(z \ell)
      - k z
      + \vv s \sinh(z \pp) \cosh(z (\ell - \pp))
      - \vv s \sinh(z \pp)
    }{
      k z \sinh(z \ell)
      + \vv s \sinh(z \pp) \sinh(z (\ell - \pp))
    }.
  \end{align*}
  Simplifying, we find that
  \begin{align*}
    B_1 \eta
     & =
    k z (\cosh(z \ell) - 1)
    + \vv s \sinh(z (\ell - \pp)) (\cosh(z \pp) - 1)                     \\*
     & =
    2 k z \sinh^2\myparen*{\tfrac{1}{2} z \ell}
    + 2 \vv s \sinh(z (\ell - \pp)) \sinh^2\myparen*{\tfrac{1}{2} z \pp} \\*
     & =
    \beta_1,                                                             \\
    B_2 \eta
     & =
    k z (\cosh(z \ell) - 1)
    + \vv s \sinh(z \pp) (\cosh(z (\ell - \pp)) - 1)                     \\*
     & =
    2 k z \sinh^2\myparen*{\tfrac{1}{2} z \ell}
    + 2 \vv s \sinh(z \pp) \sinh^2\myparen*{\tfrac{1}{2} z (\ell - \pp)} \\*
     & =
    \beta_2.
  \end{align*}
  Thus, we proved the expression for $G$ in~\eqref{eq:Guniform}.
  It directly follows from~\eqref{eq:uniform-laplace-output} that
  $H(s) = \frac{1}{\ell} \int_0^\ell G(x, s) \dif{x}$.
  Then, inserting~\eqref{eq:Guniform} into this formula and using symbolic
  computation, we find that
  \begin{align*}
    H(s)
     & =
    \frac{1}{\ell}
    \int_0^\ell G(x, s) \dif{x} \\
     & =
    \frac{
      z \ell \eta
      - 2 k z (\cosh(z \ell) - 1)
      - 2 \vv s (
      \sinh(z \ell)
      - \sinh(z \pp)
      - \sinh(z (\ell - \pp))
      )
    }{
      z \ell \eta
      (s^2 + d s)
    }.
  \end{align*}
  Simplifying gives
  \begin{align*}
    H(s)
     & =
    \frac{
      z \ell \eta
      - 4 k z \sinh^2\myparen*{\frac{1}{2} z \ell}
      - 4 \vv s
      \sinh\myparen*{\frac{1}{2} z \ell}
      (
      \cosh\myparen*{\frac{1}{2} z \ell}
      - \cosh\myparen*{\frac{1}{2} z \ell - z \pp}
      )
    }{
      z \ell \eta
      (s^2 + d s)
    }    \\
     & =
    \frac{
      z \ell \eta
      - 4 k z \sinh^2\myparen*{\frac{1}{2} z \ell}
      - 8 \vv s
      \sinh\myparen*{\frac{1}{2} z \ell}
      \sinh\myparen*{\frac{1}{2} z (\ell - \pp)}
      \sinh\myparen*{\frac{1}{2} z \pp}
    }{
      z \ell \eta
      (s^2 + d s)
    },
  \end{align*}
  which proves~\eqref{eq:Huniform} and thus concludes the proof.
\end{proof}
Here, we note that \Cref{thm:uniform-g-h} gives an explicit expression for the
transfer function value as a function of string parameters,
which will then enable efficient analysis of the influence of the damping
parameters $\vv$ and $\pp$ on the string behavior.
To our knowledge, this is the first result explicitly deriving the dependence of
the transfer function on point-wise external damping in the context of a damped
wave equation.
Using this result, we will employ simultaneous optimization and analysis of
damping position and viscosity parameters.
This approach will provide a new perspective and assist in overcoming challenges
encountered in the finite-dimensional cases.

\subsection{Analysis of the Limiting Cases Under Uniform Forcing}%
\label{sec:uniformlimiting}

\Cref{thm:uniform-g-h} has explicitly characterized the transfer function $H$
in terms of damping parameters.
Here, to gain further insight, we discuss several important limiting cases.
In particular, we find that $H$ has a removable singularity at $s = 0$.
Additionally,
we derive expressions for the limiting cases of no external damping
($\vv = 0$, $\pp \to 0$, or $\pp \to \ell$) and
infinite external damping ($\vv \to \infty$).
\begin{lemma}\label{thm:uniform-output-limits}
  Assume the set-up of \Cref{thm:uniform-g-h}.
  Then
  \begin{align}
    \label{eq:Hsat0uniform}
    \lim_{s \to 0} H(s)
     & =
    \frac{\ell^2}{12 k},      \\
    \label{eq:Hgat0uniform}
    H(s) \rvert_{\vv = 0}
    =
    \lim_{\substack{\pp \to 0 \\ \textnormal{or} \\ \pp \to \ell}} H(s)
     & =
    \frac{
      z \ell
      - 2 \tanh\myparen*{\frac{1}{2} z \ell}
    }{
      z \ell
      (s^2 + d s)
    },                        \\
    \label{eq:Hgatinfuniform}
    \lim_{\vv \to \infty}
    H(s)
     & =
    \frac{
      z
      \ell
      - 2 \myparen*{
        \tanh\myparen*{\frac{1}{2} z \pp}
        + \tanh\myparen*{\frac{1}{2} z (\ell - \pp)}
      }
    }{
      z \ell
      (s^2 + d s)
    }.
  \end{align}
\end{lemma}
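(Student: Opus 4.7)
The plan is to substitute into the expressions of Theorem~\ref{thm:uniform-g-h} and simplify the ratio $\gamma/(z\ell\eta)$ in each of the three limits using standard hyperbolic identities; in every case the goal is to reduce this ratio to a combination of $\tanh$ terms so that $1 - \gamma/(z\ell\eta)$ takes the stated form. Noting that all quantities in~\eqref{eq:variables} depend on $z$ only through $z^2$ (they are even in $z$), the branch choice for $z = \sqrt{(s^2 + ds)/k}$ does not affect any of the three limits.

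For~\eqref{eq:Hgat0uniform}, the three asserted equalities all reduce to the same calculation. Setting $\vv = 0$ in~\eqref{gamma} and~\eqref{eta} leaves $\gamma = 4kz\sinh^2(z\ell/2)$ and $\eta = kz\sinh(z\ell)$, so $\gamma/(z\ell\eta) = 4\sinh^2(z\ell/2)/(z\ell\sinh(z\ell))$; the double-angle identity $\sinh(z\ell) = 2\sinh(z\ell/2)\cosh(z\ell/2)$ collapses this to $2\tanh(z\ell/2)/(z\ell)$. The same formula emerges as $\pp \to 0$ or $\pp \to \ell$, because in either boundary limit the factors $\sinh(z\pp), \sinh(z\pp/2)$ (respectively $\sinh(z(\ell-\pp)), \sinh(z(\ell-\pp)/2)$) vanish, killing precisely the $\vv s$-contributions in both $\gamma$ and $\eta$.

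For~\eqref{eq:Hgatinfuniform}, I would divide numerator and denominator of $\gamma/\eta$ by $\vv s$ (with $s \neq 0$), so that only the $\vv s$-terms survive as $\vv \to \infty$: $\gamma/\eta \to 8\sinh(z\ell/2)\sinh(z\pp/2)\sinh(z(\ell-\pp)/2)/\bigl(\sinh(z\pp)\sinh(z(\ell-\pp))\bigr)$. Expanding $\sinh(z\pp) = 2\sinh(z\pp/2)\cosh(z\pp/2)$ and the analogous identity for $\sinh(z(\ell-\pp))$ and cancelling gives $2\sinh(z\ell/2)/\bigl(\cosh(z\pp/2)\cosh(z(\ell-\pp)/2)\bigr)$; the sum-of-angles formula $\sinh(z\ell/2) = \sinh(z\pp/2)\cosh(z(\ell-\pp)/2) + \cosh(z\pp/2)\sinh(z(\ell-\pp)/2)$ then produces $2\bigl(\tanh(z\pp/2) + \tanh(z(\ell-\pp)/2)\bigr)$, from which the claim follows.

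For~\eqref{eq:Hsat0uniform}, the key observation is that $\vv s \to 0$ as $s \to 0$; since $\gamma, \eta$ are polynomials in $\vv s$ and $\eta/z^2 \to k\ell$ as $z \to 0$, the limit agrees with $\lim_{s \to 0} H(s)\rvert_{\vv = 0}$, which by~\eqref{eq:Hgat0uniform} equals $\lim_{s \to 0} (z\ell - 2\tanh(z\ell/2))/(z\ell(s^2+ds))$. Using $\tanh(u) = u - u^3/3 + O(u^5)$ gives $z\ell - 2\tanh(z\ell/2) = (z\ell)^3/12 + O(z^5)$, and since $z\ell(s^2+ds) = kz^3\ell + O(z^5)$ the ratio tends to $\ell^2/(12k)$. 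I expect the removable-singularity argument to be the main obstacle, since one must verify that the $O(z^3)$ contribution to $z\ell\eta - \gamma$ cancels for all $\vv$; a direct Taylor expansion through order $z^5$ of both $\gamma$ and $z\ell\eta$, together with the fact that every $\vv s$-dependent contribution to the leading coefficient vanishes as $s \to 0$, settles this cleanly.
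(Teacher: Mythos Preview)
Your proposal is correct and follows essentially the same route as the paper: for~\eqref{eq:Hgat0uniform} and~\eqref{eq:Hgatinfuniform} the hyperbolic identities you invoke (double angle, addition formula) are exactly those used in the paper, and for~\eqref{eq:Hsat0uniform} both arguments come down to writing $H(s)=(z\ell\eta-\gamma)/(kz^{3}\ell\eta)$, observing $\eta/z^{2}\to k\ell$, and showing that the $\vv s$--contribution to $(z\ell\eta-\gamma)/z^{5}$ vanishes while the remaining piece tends to $k\ell^{4}/12$. The only cosmetic difference is that the paper handles the last step by a direct L'H\^opital computation on the two pieces, whereas you first route the limit through the already-established formula~\eqref{eq:Hgat0uniform} and then Taylor-expand $\tanh$; the cancellation of the $O(z^{3})$ terms that you flag as the key check is precisely what makes the paper's second L'H\^opital limit equal to zero.
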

\begin{proof}
  Note that
  \begin{equation}\label{eq:lim-h-0-uniform}
    \lim_{s \to 0}
    H(s)
    =
    \lim_{s \to 0}
    \frac{1}{z^2 k}
    \myparen*{1 - \frac{\gamma}{z \ell \eta}}
    =
    \lim_{s \to 0}
    \frac{z \ell \eta - \gamma}{z^3 k \ell \eta}
    =
    \lim_{s \to 0}
    \frac{\frac{z \ell \eta - \gamma}{z^5}}{k \ell \frac{\eta}{z^2}}.
  \end{equation}
  First, using the definitions of $z$ and $\eta$ from~\eqref{z} and~\eqref{eta},
  we obtain
  \begin{equation}\label{eq:lim-eta-z2}
    \lim_{s \to 0}
    \frac{\eta}{z^2}
    =
    \lim_{s \to 0}
    \myparen*{
      \frac{
        k \sinh(z \ell)
      }{z}
      +
      \vv s
      \cdot
      \frac{\sinh(z \pp)}{z}
      \cdot
      \frac{\sinh(z (\ell - \pp))}{z}
    }
    =
    k \ell.
  \end{equation}
  Second, from
  \begin{align*}
    \frac{z \ell \eta - \gamma}{z^5}
    = {}
     &
    \frac{
      \ell
      k z \sinh(z \ell)
      - 4 k \sinh^2\myparen*{\tfrac{1}{2} z \ell}
    }{z^4} \\
     &
    +
    \frac{
      z \ell
      \vv s \sinh(z \pp) \sinh(z (\ell - \pp))
      - 8 \vv s
      \sinh\myparen*{\tfrac{1}{2} z \ell}
      \sinh\myparen*{\tfrac{1}{2} z \pp}
      \sinh\myparen*{\tfrac{1}{2} z (\ell - \pp)}
    }{z^5}
  \end{align*}
  and, via repeated use of L'H\^opital's rule,
  \begin{align*}
    \lim_{z \to 0}
    \frac{
      \ell k z \sinh(z \ell)
      - 4 k \sinh^2\myparen*{\tfrac{1}{2} z \ell}
    }{z^4}
     & =
    \frac{k \ell^4}{12}
  \end{align*}
  and
  \begin{align*}
    \lim_{z \to 0}
    \frac{
      z \ell
      \vv s \sinh(z \pp) \sinh(z (\ell - \pp))
      - 8 \vv s
      \sinh\myparen*{\tfrac{1}{2} z \ell}
      \sinh\myparen*{\tfrac{1}{2} z \pp}
      \sinh\myparen*{\tfrac{1}{2} z (\ell - \pp)}
    }{z^5} 
     & =
    0,
  \end{align*}
  we find that
  \begin{equation}\label{eq:lim-z5}
    \lim_{s \to 0}
    \frac{z \ell \eta - \gamma}{z^5}
    =
    \frac{k \ell^4}{12}.
  \end{equation}
  Then, from~\eqref{eq:lim-h-0-uniform},~\eqref{eq:lim-eta-z2},
  and~\eqref{eq:lim-z5},
  it follows that
  \begin{align*}
    \lim_{s \to 0}
    H(s)
    =
    \frac{
      \lim\limits_{s \to 0}
      \frac{z \ell \eta - \gamma}{z^5}
    }{
      k \ell
      \lim\limits_{s \to 0}
      \frac{\eta}{z^2}
    }
    =
    \frac{\frac{k \ell^4}{12}}{k^2 \ell^2}
    =
    \frac{\ell^2}{12 k},
  \end{align*}
  which proves~\eqref{eq:Hsat0uniform}.

  In the case of no external damping
  ($\vv = 0$, $\pp \to 0$, or $\pp \to \ell$),
  we obtain
  \begin{align*}
    H(s)
     & =
    \frac{
      \ell k z^2 \sinh(z \ell)
      - 4 k z \sinh^2\myparen*{\frac{1}{2} z \ell}
    }{
      \ell k z^2 \sinh(z \ell)
      (s^2 + d s)
    }
    =
    \frac{
      2 z \ell
      \sinh\myparen*{\frac{1}{2} z \ell}
      \cosh\myparen*{\frac{1}{2} z \ell}
      - 4
      \sinh^2\myparen*{\frac{1}{2} z \ell}
    }{
      2 z \ell
      \sinh\myparen*{\frac{1}{2} z \ell}
      \cosh\myparen*{\frac{1}{2} z \ell}
      (s^2 + d s)
    }    \\
     & =
    \frac{
      z \ell
      - 2 \tanh\myparen*{\frac{1}{2} z \ell}
    }{
      z \ell
      (s^2 + d s)
    },
  \end{align*}
  which proves~\eqref{eq:Hgat0uniform}.

  Finally,
  \begin{align*}
    \lim_{\vv \to \infty}
    H(s)
     & =
    \frac{
      z
      \ell
      \sinh(z \pp)
      \sinh(z (\ell - \pp))
      - 8
      \sinh\myparen*{\frac{1}{2} z \ell}
      \sinh\myparen*{\frac{1}{2} z \pp}
      \sinh\myparen*{\frac{1}{2} z (\ell - \pp)}
    }{
      z \ell
      \sinh(z \pp)
      \sinh(z (\ell - \pp))
      (s^2 + d s)
    }    \\
     & =
    \frac{
      z
      \ell
      - \frac{
        8
        \sinh\myparen*{\frac{1}{2} z \ell}
        \sinh\myparen*{\frac{1}{2} z \pp}
        \sinh\myparen*{\frac{1}{2} z (\ell - \pp)}
      }{
        \sinh(z \pp)
        \sinh(z (\ell - \pp))
      }
    }{
      z \ell
      (s^2 + d s)
    }    \\
     & =
    \frac{
      z
      \ell
      - \frac{
        2
        \sinh\myparen*{\frac{1}{2} z \ell}
      }{
        \cosh\myparen*{\frac{1}{2} z \pp}
        \cosh\myparen*{\frac{1}{2} z (\ell - \pp)}
      }
    }{
      z \ell
      (s^2 + d s)
    }    \\
     & =
    \frac{
      z
      \ell
      - 2 \frac{
        \sinh\myparen*{\frac{1}{2} z \pp}
        \cosh\myparen*{\frac{1}{2} z (\ell - \pp)}
        + \cosh\myparen*{\frac{1}{2} z \pp}
        \sinh\myparen*{\frac{1}{2} z (\ell - \pp)}
      }{
        \cosh\myparen*{\frac{1}{2} z \pp}
        \cosh\myparen*{\frac{1}{2} z (\ell - \pp)}
      }
    }{
      z \ell
      (s^2 + d s)
    }    \\
     & =
    \frac{
      z
      \ell
      - 2 \myparen*{
        \tanh\myparen*{\frac{1}{2} z \pp}
        + \tanh\myparen*{\frac{1}{2} z (\ell - \pp)}
      }
    }{
      z \ell
      (s^2 + d s)
    },
  \end{align*}
  which proves~\eqref{eq:Hgatinfuniform}, completing the proof.
\end{proof}
\Cref{thm:uniform-output-limits},
more specifically~\eqref{eq:Hsat0uniform},
illustrates that $H(0)$ is nonzero and independent of external damping.
Therefore, the $\Hinf$ norm of $H$ has a uniform positive lower bound over all
damper viscosities and positions,
which we do not know in advance for the $\Htwo$ norm.
The limiting values of the transfer function for the boundary forcing case
will be analyzed in \Cref{sec:boundary}
where we will also analyze and compare the limiting cases of transfer functions
for uniform and boundary forcing directly in terms of string parameters.

\subsection{Numerical Results for the Uniform Forcing Case}%
\label{sec:uniformresults}

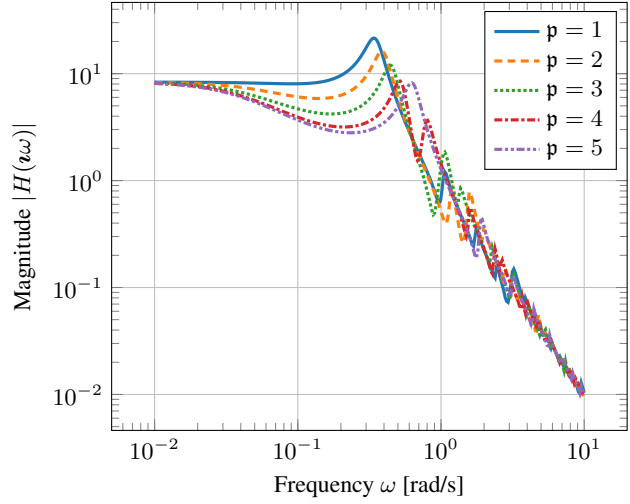
\begin{figure}[tb]
  \centering
  \tikzexternalenable%
  \tikzsetnextfilename{uniform-h-g}
  \begin{tikzpicture}
    \footnotesize
    \begin{axis}[
        grid,
        xlabel={Frequency $\omega$ [rad/s]},
        ylabel={Magnitude $\abs{H(\imag \omega)}$},
        xmode=log,
        ymode=log,
        cycle list name=mpl,
        legend entries={$\pp = 1$, $\pp = 2$, $\pp = 3$, $\pp = 4$, $\pp = 5$},
        legend cell align=left,
      ]
      \addplot table [x=w, y=H1] {fig/uniform_h_g.txt};
      \addplot table [x=w, y=H2] {fig/uniform_h_g.txt};
      \addplot table [x=w, y=H3] {fig/uniform_h_g.txt};
      \addplot table [x=w, y=H4] {fig/uniform_h_g.txt};
      \addplot table [x=w, y=H5] {fig/uniform_h_g.txt};
    \end{axis}
  \end{tikzpicture}
  \tikzexternaldisable%
  \caption{Magnitude plot of $H$ in the case of uniform forcing
    for fixed damper viscosity $\vv = 10$ and varying damping position $\pp$
    ($\ell = 10$, $d = 0.08$, $k = 1$)}%
  \label{fig:uniform-h-g}
\end{figure}

\begin{figure}[tb]
  \centering
  \tikzexternalenable%
  \tikzsetnextfilename{uniform-h-p}
  \begin{tikzpicture}
    \footnotesize
    \begin{axis}[
        grid,
        xlabel={Frequency $\omega$ [rad/s]},
        ylabel={Magnitude $\abs{H(\imag \omega)}$},
        xmode=log,
        ymode=log,
        cycle list name=mpl,
        legend entries={$\vv = 0$, $\vv = 1$, $\vv = 10$, $\vv = 100$},
        legend cell align=left,
      ]
      \addplot table [x=w, y=H1] {fig/uniform_h_p.txt};
      \addplot table [x=w, y=H2] {fig/uniform_h_p.txt};
      \addplot table [x=w, y=H3] {fig/uniform_h_p.txt};
      \addplot table [x=w, y=H4] {fig/uniform_h_p.txt};
    \end{axis}
  \end{tikzpicture}
  \tikzexternaldisable%
  \caption{Magnitude plot of $H$ in the case of uniform forcing
    for fixed damping position $\pp = 4.5$ and varying damper viscosity $\vv$
    ($\ell = 10$, $d = 0.08$, $k = 1$)}%
  \label{fig:uniform-h-p}
\end{figure}

\begin{figure}[tb]
  \centering
  \tikzexternalenable%
  \tikzsetnextfilename{uniform-hinf}
  \begin{tikzpicture}
    \footnotesize
    \begin{axis}[
        enlargelimits=false,
        axis on top,
        ymode=log,
        xlabel={Position $\pp$},
        ylabel={Viscosity $\vv$},
        point meta min=8.333333333333332,
        point meta max=32.53749272254504,
        colorbar,
        colormap/viridis,
      ]
      \addplot graphics [
          xmin=0,
          xmax=5,
          ymin=1e-1,
          ymax=1e3,
        ] {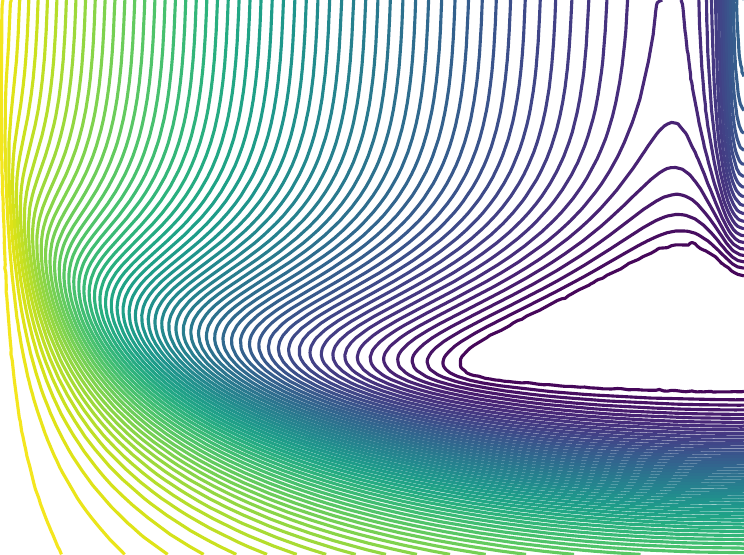};
    \end{axis}
  \end{tikzpicture}
  \tikzexternaldisable%
  \caption{$\Hinf$ norm of $H$ in the case of uniform forcing
    for different damping positions and viscosities
    ($\ell = 10$, $d = 0.08$, $k = 1$)}%
  \label{fig:uniform-hinf}
\end{figure}

\begin{figure}[tb]
  \centering
  \tikzexternalenable%
  \tikzsetnextfilename{uniform-h2}
  \begin{tikzpicture}
    \footnotesize
    \begin{axis}[
        enlargelimits=false,
        axis on top,
        ymode=log,
        xlabel={Position $\pp$},
        ylabel={Viscosity $\vv$},
        point meta min=2.419454658378959,
        point meta max=6.457985362604408,
        colorbar,
        colormap/viridis,
      ]
      \addplot graphics [
          xmin=0,
          xmax=5,
          ymin=1e-1,
          ymax=1e3,
        ] {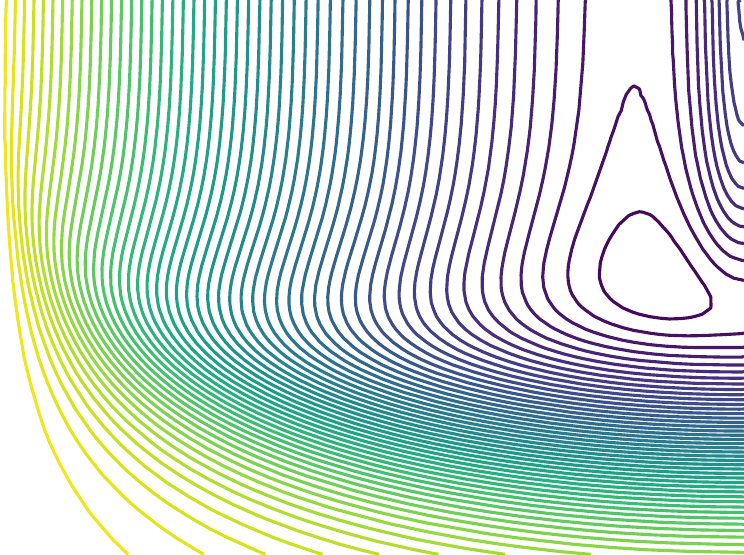};
    \end{axis}
  \end{tikzpicture}
  \tikzexternaldisable%
  \caption{$\Htwo$ norm of $H$ in the case of uniform forcing
    for different damping positions and viscosities
    ($\ell = 10$, $d = 0.08$, $k = 1$)}%
  \label{fig:uniform-h2}
\end{figure}

\begin{figure}[tb]
  \centering
  \tikzexternalenable%
  \tikzsetnextfilename{uniform-hinf-comp}
  \begin{tikzpicture}
    \footnotesize
    \begin{groupplot}[
        group style={
            group size=2 by 1,
            xlabels at=edge bottom,
            ylabels at=edge left,
            xticklabels at=edge bottom,
            yticklabels at=edge left,
            horizontal sep=3ex,
          },
        height=0.4\linewidth,
        width=0.48\linewidth,
        enlargelimits=false,
        axis on top,
        ymode=log,
        xlabel={Position $\pp$},
        ylabel={Viscosity $\vv$},
        xtick={0, 1, 2, 3, 4, 5},
        ytick={1e-1, 1, 1e1, 1e2, 1e3},
      ]
      \nextgroupplot[
        title={Continuous},
      ]
      \addplot graphics [
          xmin=0,
          xmax=5,
          ymin=1e-1,
          ymax=1e3,
        ] {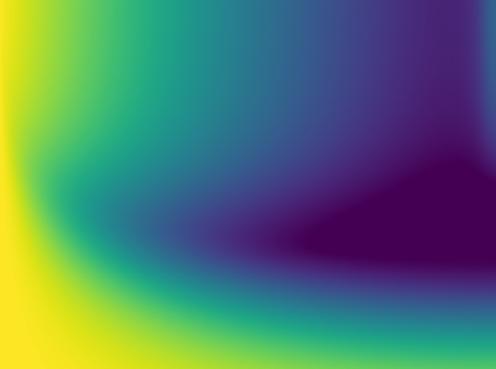};
      \nextgroupplot[
        title={Discrete},
        point meta min=8.328124999999881,
        point meta max=32.53749272254504,
        colorbar,
        colormap/viridis,
      ]
      \addplot graphics [
          xmin=0,
          xmax=5,
          ymin=1e-1,
          ymax=1e3,
        ] {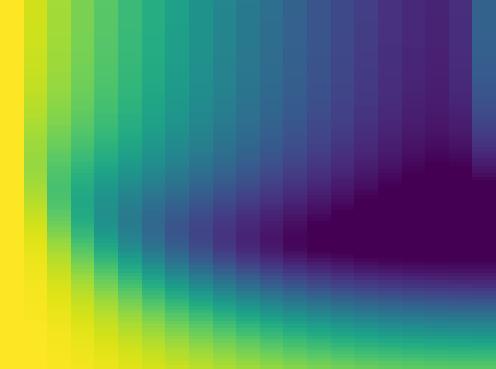};
    \end{groupplot}
  \end{tikzpicture}
  \tikzexternaldisable%
  \caption{Comparison of $\Hinf$ norm of $H$ in the case of uniform forcing
    for the continuous and discretized problem
    ($\ell = 10$, $d = 0.08$, $k = 1$)}%
  \label{fig:uniform-hinf-comp}
\end{figure}

\begin{figure}[tb]
  \centering
  \tikzexternalenable%
  \tikzsetnextfilename{uniform-h2-comp}
  \begin{tikzpicture}
    \footnotesize
    \begin{groupplot}[
        group style={
            group size=2 by 1,
            xlabels at=edge bottom,
            ylabels at=edge left,
            xticklabels at=edge bottom,
            yticklabels at=edge left,
            horizontal sep=3ex,
          },
        height=0.4\linewidth,
        width=0.48\linewidth,
        enlargelimits=false,
        axis on top,
        ymode=log,
        xlabel={Position $\pp$},
        ylabel={Viscosity $\vv$},
        xtick={0, 1, 2, 3, 4, 5},
        ytick={1e-1, 1, 1e1, 1e2, 1e3},
      ]
      \nextgroupplot[
        title={Continuous},
      ]
      \addplot graphics [
          xmin=0,
          xmax=5,
          ymin=1e-1,
          ymax=1e3,
        ] {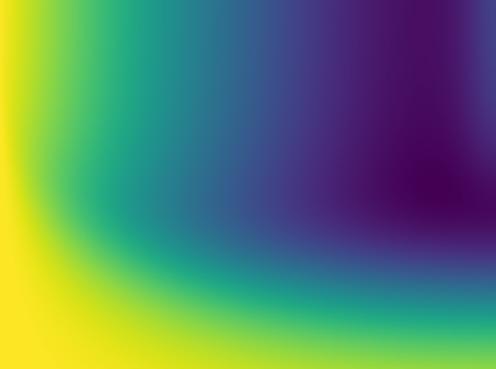};
      \nextgroupplot[
        title={Discrete},
        point meta min=2.419454658378959,
        point meta max=6.457985362604408,
        colorbar,
        colormap/viridis,
      ]
      \addplot graphics [
          xmin=0,
          xmax=5,
          ymin=1e-1,
          ymax=1e3,
        ] {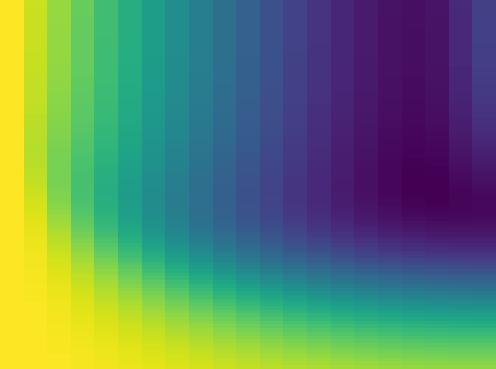};
    \end{groupplot}
  \end{tikzpicture}
  \tikzexternaldisable%
  \caption{Comparison of $\Htwo$ norm of $H$ in the case of uniform forcing
    for the continuous and discretized problem
    ($\ell = 10$, $d = 0.08$, $k = 1$)}%
  \label{fig:uniform-h2-comp}
\end{figure}

We now numerically illustrate the above analysis and show its implications for
the damping optimization problems.
We choose $\ell = 10$, $d = 0.08$, and $k = 1$.

\Cref{fig:uniform-h-g} illustrates the transfer function $H$ for $\vv = 10$ and
for five different damping positions.
We observe many peaks for higher frequencies as expected for the wave equation.
Furthermore, we also observe an almost constant value of
$\frac{\ell^2}{12 k} \approx 8.33$ close to the zero frequency,
as predicted by \Cref{thm:uniform-output-limits}.
Additionally, we note that the highest peak moves down as the damper position
approaches the middle of the string,
which is not surprising for uniform forcing.
We can also observe that for $\pp$ around $4$ and $5$,
the highest peak appears to be below the value at the zero frequency, i.e.,
the $\Hinf$ norm is achieved at zero.

Next, \Cref{fig:uniform-h-p} illustrates the transfer function $H$ for
$\pp = 4.5$ and for four different viscosities.
We can make similar observations as for the previous figure.
In addition, we observe how the transfer function values converge as
$\vv \to \infty$.
These observations and insights at the continuous PDE level are facilitated by
the explicitly derived transfer functions at the PDE level.

Now with the explicit expression of $H$,
we can directly analyze and illustrate how the $\Htwo$ and
$\Hinf$ norms of $H$ vary with respect to the position $\pp$ and
the viscosity $\vv$
as illustrated in \Cref{fig:uniform-hinf,fig:uniform-h2}.
For this example, we can first conclude that optimal parameters are different
for these two measures.
Moreover, while for the $\Htwo$ measure, the global minimum is attained at a
unique point $(\pp^*, \vv^*) \approx (4.388, 9.695)$,
for the $\Hinf$ measure, the minimum is attained over a region
(which we could expect based on \Cref{fig:uniform-h-p,fig:uniform-h-g}).
It is interesting to observe from \Cref{fig:uniform-h2,fig:uniform-hinf}
how optimal viscosity changes for the fixed parameter $\pp$
since the optimal viscosity should be much larger for smaller position
parameters.

We show the comparison between the continuous and discrete models in
\Cref{fig:uniform-hinf-comp,fig:uniform-h2-comp}.
As expected, the continuous and discrete models have similar behavior in both
cases.
However, as mentioned in the introduction, the discrete version provides limited
information concerning the damping position.
See \Cref{sec:discrete} for details on the discretization.
The simultaneous analysis of position and viscosity variation in our
approach is made possible by the continuous modeling of the position $\pp$.
While the discrete case offers only an approximation of the damped equation,
it limits the ability to perform comparable continuous analysis and
optimization of position $\pp$.

\section{Boundary Forcing}%
\label{sec:boundary}

Now we consider the damped wave equation with
a forcing on the left boundary,
homogeneous Dirichlet boundary condition on the right, and
no distributed forcing.
This corresponds to choosing
$b \equiv 0$ in~\eqref{eq:wave-pde-eq},
$b_L = 1$ in~\eqref{eq:wave-pde-bl}, and
$b_R = 0$ in~\eqref{eq:wave-pde-br}.
As in~\Cref{sec:uniform},
the output $y$ is chosen as the average displacement, i.e.,
\begin{equation}\label{eq:output-avg-disp-bnd}
  y(t) = \frac{1}{\ell} \int_0^\ell q(x, t) \dif{x}.
\end{equation}
Following the structure of~\Cref{sec:uniform},
for this choice of forcing and output,
we derive the transfer functions $G$ and $H$ (\Cref{sec:boundarytf}),
analyze the limiting cases (\Cref{sec:boundarylimiting}), and
illustrate the results for the damping optimization problem for the $\Htwo$ and
$\Hinf$ measures (\Cref{sec:boundaryresults}).
Throughout the section,
we make comparisons to the uniform forcing case where appropriate.

\subsection{Deriving the Transfer Functions for the case of Boundary Forcing}%
\label{sec:boundarytf}

Using $b \equiv 0$, $b_L = 1$, $b_R = 0$, and the output $y$
in~\eqref{eq:output-avg-disp-bnd},
the Laplace domain system~\eqref{eq:wave-pde-laplace} becomes
\begin{subequations}\label{eq:boundary-laplace}
  \begin{align}
    \label{eq:boundary-laplace-ode}
    \myparen*{s^2 + d s} Q(x, s)
     & =
    k Q_{xx}(x, s),
     & x \in (0, \pp) \cup (\pp, \ell),              \\*
    \label{eq:boundary-laplace-mid1}
    Q\myparen*{\pp^-, s}
     & =
    Q\myparen*{\pp^+, s},                            \\*
    \label{eq:boundary-laplace-mid2}
    \frac{\vv s}{k} Q(\pp, s)
     & =
    Q_x\myparen*{\pp^+, s}
    - Q_x\myparen*{\pp^-, s},                        \\*
    \label{eq:boundary-laplace-bc1}
    Q(0, s)
     & = U(s),                                       \\*
    \label{eq:boundary-laplace-bc2}
    Q(\ell, s)
     & = 0,                                          \\*
    \label{eq:boundary-laplace-output}
    Y(s)
     & = \frac{1}{\ell} \int_0^\ell Q(x, s) \dif{x},
  \end{align}
\end{subequations}
for $s \in \bbC$ for which~\eqref{eq:boundary-laplace-ode} has a unique
solution.
We derive the transfer functions $G$ and $H$ next.
\begin{theorem}\label{thm:boundary-g-h}
  Consider the boundary value problem~\eqref{eq:boundary-laplace}.
  Then, the displacement transfer function $G$ is given by
  \begin{equation}  \label{eq:Gboundary}
    G(x, s)
    =
    \begin{cases}
      \frac{k z}{\eta} \sinh(z (\ell - x))
      + \frac{\vv s}{\eta} \sinh(z (\pp - x)) \sinh(z (\ell - \pp))
       & x \le \pp, \\
      \frac{k z}{\eta} \sinh(z (\ell - x)),
       & x > \pp
    \end{cases}
  \end{equation}
  and the output transfer function $H$ by
  \begin{equation}  \label{eq:Hboundary}
    H(s) = \frac{\beta_1}{z \ell \eta},
  \end{equation}
  with $z$, $\beta_1$, and $\eta$ as in \Cref{thm:uniform-g-h}.
\end{theorem}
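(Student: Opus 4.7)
The plan is to mirror the structure of the proof of \Cref{thm:uniform-g-h}, with the simplification that the ODE~\eqref{eq:boundary-laplace-ode} is now homogeneous on each subinterval (no particular solution is needed because $b \equiv 0$). First I would write a general solution of the homogeneous ODE $k Q_{xx} = (s^2 + d s) Q$ on each of the intervals $(0, \pp)$ and $(\pp, \ell)$ using the parameter $z$ from~\eqref{z}. A convenient ansatz, tailored to the boundary conditions, is to write
\begin{equation*}
  Q(x, s)
  =
  \begin{cases}
    A_1 \cosh(z x) + B_1 \sinh(z x), & x \le \pp, \\
    A_2 \cosh(z (\ell - x)) + B_2 \sinh(z (\ell - x)), & x > \pp,
  \end{cases}
\end{equation*}
so that the left boundary condition $Q(0, s) = U(s)$ immediately forces $A_1 = U(s)$, and the right boundary condition $Q(\ell, s) = 0$ forces $A_2 = 0$.

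Next, I would impose the two interface conditions at $x = \pp$ (continuity and the jump condition involving $\vv s / k$) to obtain a $2 \times 2$ linear system in the remaining unknowns $B_1$ and $B_2$. This is the analogue of the $4 \times 4$ system that appeared in the proof of \Cref{thm:uniform-g-h}, but with two variables already eliminated. Solving this system symbolically (as in the earlier proof, one can do this by hand or with SymPy) should produce expressions whose common denominator equals $\eta$ from~\eqref{eta}, since the denominator of such transfer functions is determined by the same boundary operator and does not depend on the particular forcing. After substituting the coefficients back into the ansatz and rewriting using the identities $\sinh(a-b) = \sinh a \cosh b - \cosh a \sinh b$ and $\sinh(z\ell) = \sinh(z\pp)\cosh(z(\ell-\pp)) + \cosh(z\pp)\sinh(z(\ell-\pp))$, I expect the two branches of $G$ to collapse to the claimed form~\eqref{eq:Gboundary}; as a sanity check, one may verify that $G(0, s) = \eta/\eta = 1$ and that both branches agree at $x = \pp$.

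For the output transfer function $H$, I would use the direct consequence of~\eqref{eq:boundary-laplace-output}, namely $H(s) = \frac{1}{\ell} \int_0^\ell G(x, s)\,\dif{x}$. Splitting the integral at $\pp$, the first term $\frac{kz}{\eta}\int_0^\ell \sinh(z(\ell-x))\,\dif{x}$ evaluates to $\frac{k(\cosh(z\ell)-1)}{\eta}$, and the second term (present only on $(0, \pp)$) evaluates to $\frac{\vv s \sinh(z(\ell-\pp))(\cosh(z\pp)-1)}{z\eta}$. Applying the identity $\cosh(y) - 1 = 2\sinh^2(y/2)$ to both cosh-minus-one factors and combining over the common denominator $z \ell \eta$ yields precisely the numerator $2 k z \sinh^2(\tfrac{1}{2} z \ell) + 2 \vv s \sinh(z(\ell-\pp))\sinh^2(\tfrac{1}{2} z \pp) = \beta_1$ from~\eqref{beta1}, proving $H(s) = \beta_1/(z\ell\eta)$.

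I do not anticipate any real mathematical obstacle, since the argument is a linear boundary-value-problem computation structurally identical to that of \Cref{thm:uniform-g-h}; the only care required is bookkeeping in the symbolic simplification, in particular recognizing the correct combinations of hyperbolic identities so that the denominators collapse to $\eta$ and the numerator of $H$ collapses to $\beta_1$. This step is the most tedious, and as in the uniform-forcing case it is natural to offload it to a symbolic algebra system and then verify the simplified form by hand.
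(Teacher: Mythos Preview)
Your proposal is correct and follows essentially the same route as the paper: write the general solution of the homogeneous ODE on each subinterval, impose the boundary and interface conditions to solve for the coefficients, simplify via hyperbolic identities to get~\eqref{eq:Gboundary}, then integrate $G$ over $[0,\ell]$ and use $\cosh y - 1 = 2\sinh^2(y/2)$ to recognize $\beta_1$. The only cosmetic difference is that the paper uses the basis $\{\cosh(zx),\sinh(zx)\}$ on \emph{both} subintervals (yielding a $4\times 4$ system with two trivial equations), whereas you adopt the $\{\cosh(z(\ell-x)),\sinh(z(\ell-x))\}$ basis on $(\pp,\ell)$ as in the proof of \Cref{thm:uniform-g-h}, which immediately kills $A_2$ and leaves a $2\times 2$ system; this is a harmless and slightly cleaner variant.
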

\begin{proof}
  From the ODE~\eqref{eq:boundary-laplace-ode},
  we conclude that the solution is of the form
  \begin{equation*}
    Q(x, s)
    =
    U(s)
    \cdot
    \begin{cases}
      A_1 \cosh(z x)
      + B_1 \sinh(z x),
       & x \le \pp, \\
      A_2 \cosh(z x)
      + B_2 \sinh(z x),
       & x > \pp.
    \end{cases}
  \end{equation*}
  From the boundary conditions, it follows that
  \begin{align*}
    A_1
     & = 1,              \\
    \cosh(z \ell) A_2
    + \sinh(z \ell) B_2
     & = 0,              \\
    \cosh(z \pp) A_1
    + \sinh(z \pp) B_1
     & =
    \cosh(z \pp) A_2
    + \sinh(z \pp) B_2,  \\
    \frac{\vv s}{k}
    \myparen*{
      \cosh(z \pp) A_1
      + \sinh(z \pp) B_1
    }
     & =
    z \sinh(z \pp) A_2
    + z \cosh(z \pp) B_2 \\*
     & \quad
    - z \sinh(z \pp) A_1
    - z \cosh(z \pp) B_1.
  \end{align*}
  Solving the above $4 \times 4$ system symbolically, we obtain
  \begin{align*}
    A_1
     & =
    1,   \\
    B_1
     & =
    -\frac{
      k z \cosh(z \ell)
      + \vv s \cosh(z \pp) \sinh(z (\ell - \pp))
    }{
      k z \sinh(z \ell)
      + \vv s \sinh(z \pp) \sinh(z (\ell - \pp))
    },   \\
    A_2
     & =
    \frac{
      k z \sinh(z \ell)
    }{
      k z \sinh(z \ell)
      + \vv s \sinh(\pp z ) \sinh(z (\ell - \pp))
    },   \\
    B_2
     & =
    -\frac{
      k z \cosh(z \ell)
    }{
      k z \sinh(z \ell)
      + \vv s \sinh(\pp z ) \sinh(z (\ell - \pp))
    }.
  \end{align*}
  Thus, for $x \le \pp$, we obtain
  \begin{align*}
    G(x, s)
     & =
    \cosh(z x)
    - \frac{
      k z \cosh(z \ell)
      + \vv s \cosh(z \pp) \sinh(z (\ell - \pp))
    }{
      k z \sinh(z \ell)
      + \vv s \sinh(z \pp) \sinh(z (\ell - \pp))
    }
    \sinh(z x) \\
     & =
    \frac{k z}{\eta}
    \myparen*{
      \sinh(z \ell)
      \cosh(z x)
      -
      \cosh(z \ell)
      \sinh(z x)
    }          \\*
     & \quad
    + \frac{\vv s}{\eta}
    \sinh(z (\ell - \pp))
    \myparen*{
      \sinh(z \pp)
      \cosh(z x)
      -
      \cosh(z \pp)
      \sinh(z x)
    }          \\
     & =
    \frac{k z}{\eta}
    \sinh(z (\ell - x))
    + \frac{\vv s}{\eta}
    \sinh(z (\ell - \pp))
    \sinh(z (\pp - x)).
  \end{align*}
  On the other hand, for $x > \pp$, we get
  \begin{equation*}
    G(x, s)
    =
    \frac{
      k z \sinh(z \ell)
    }{
      \eta
    }
    \cosh(z x)
    - \frac{
      k z \cosh(z \ell)
    }{
      \eta
    }
    \sinh(z x)
    =
    \frac{k z}{\eta}
    \sinh(z (\ell - x)),
  \end{equation*}
  thus proving~\eqref{eq:Gboundary}.
  Then, using~\eqref{eq:boundary-laplace-output} and~\eqref{eq:Gboundary},
  we obtain
  \begin{align*}
    H(s)
     & =
    \frac{1}{\ell}
    \int_0^\ell G(x, s) \dif{x} \\
     & =
    \frac{
      k z (\cosh(z \ell) - 1)
      + \vv s (
      \sinh(z \ell)
      - \sinh(z \pp) \cosh(z (\ell - \pp))
      - \sinh(z (\ell - \pp))
      )
    }{
      z \ell (
      k z \sinh(z \ell)
      + \vv s \sinh(z \pp) \sinh(z (\ell - \pp))
      )
    }                           \\
     & =
    \frac{
      2 k z \sinh^2\myparen*{\tfrac{1}{2} z \ell}
      + 2 \vv s
      \sinh^2\myparen*{\tfrac{1}{2} z \pp}
      \sinh(z (\ell - \pp))
    }{
      z \ell (
      k z \sinh(z \ell)
      + \vv s \sinh(z \pp) \sinh(z (\ell - \pp))
      )
    },
  \end{align*}
  which proves~\eqref{eq:Hboundary} using the definitions of $z$, $\beta_1$, and
  $\eta$ from \Cref{thm:uniform-g-h}, thus concluding the proof.
\end{proof}
Similarly to the previous section, \Cref{thm:boundary-g-h} gives an
explicit formula for the transfer function value in terms of string parameters,
ensuring efficient analysis of parameter variation.

\subsection{Analysis of the Limiting Cases Under Boundary Forcing}%
\label{sec:boundarylimiting}

To further analyze the damping optimization problem and to gain further insights, we now examine the behavior of the transfer function $H$ for several limiting cases, as we did in~\Cref{sec:uniformlimiting} for uniform forcing.
\begin{lemma}\label{thm:boundary-output-limits}
  Assume the set up of \Cref{thm:boundary-g-h}.
  Then
  \begin{align}
    \label{eq:Hsat0boundary}
    \lim_{s \to 0} H(s)
     & =
    \frac{1}{2},
    \\
    \label{eq:Hgat0boundary}
    H(s) \rvert_{\vv = 0}
    =
    \lim_{\substack{\pp \to 0 \\ \textnormal{or} \\ \pp \to \ell}} H(s)
     & =
    \frac{\tanh\myparen*{\tfrac{1}{2} z \ell}}{z \ell},
    \\
    \lim_{\vv \to \infty}
    \label{eq:Hgatinfboundary}
    H(s)
     & =
    \frac{\tanh\myparen*{\tfrac{1}{2} z \pp}}{z \ell}.
  \end{align}
\end{lemma}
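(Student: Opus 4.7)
The plan is to work directly from the closed-form $H(s) = \beta_1/(z \ell \eta)$ given by \Cref{thm:boundary-g-h} and treat each of the three limits separately, using only elementary manipulations of hyperbolic functions, in close analogy to the proof of \Cref{thm:uniform-output-limits} for the uniform forcing case. A key simplification here relative to the uniform case is that there is no overall factor of $1/(s^2 + d s)$ in $H$.

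For~\eqref{eq:Hsat0boundary}, I would first note that since $z^2 k = s^2 + d s$ we have $z \to 0$ as $s \to 0$ and, moreover, $s = O(z^2)$ near $s = 0$. Both $\beta_1$ and $\eta$ vanish at $s = 0$, so I would rescale by appropriate powers of $z$. Using $\sinh(u) = u + O(u^3)$, the first summand of $\beta_1$ is $2kz\cdot \tfrac{1}{4}z^2 \ell^2 + O(z^5) = \tfrac{k}{2} z^3 \ell^2 + O(z^5)$, while the $\vv s$ summand is $O(\vv s \cdot z \cdot z^2) = O(z^5)$ by the order bound on $s$; similarly, $\eta = k z^2 \ell + O(z^4)$. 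Hence $\lim_{s \to 0} \beta_1/z^3 = k \ell^2/2$ and $\lim_{s \to 0} \eta/z^2 = k \ell$, and combining these gives
\begin{equation*}
  \lim_{s \to 0} H(s)
  = \lim_{s \to 0} \frac{\beta_1/z^3}{\ell \cdot (\eta/z^2)}
  = \frac{k \ell^2/2}{\ell \cdot k \ell}
  = \frac{1}{2}.
\end{equation*}

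For~\eqref{eq:Hgat0boundary}, in each of the cases $\vv = 0$, $\pp \to 0$, and $\pp \to \ell$ the $\vv s$ summands in both $\beta_1$ and $\eta$ drop out (because $\sinh^2(\tfrac{1}{2} z \pp) \to 0$ when $\pp \to 0$ and $\sinh(z(\ell-\pp)) \to 0$ when $\pp \to \ell$), leaving $H(s) = 2 \sinh^2(\tfrac{1}{2} z \ell)/(z \ell \sinh(z \ell))$. Applying the double-angle identity $\sinh(z \ell) = 2 \sinh(\tfrac{1}{2} z \ell) \cosh(\tfrac{1}{2} z \ell)$ collapses this to $\tanh(\tfrac{1}{2} z \ell)/(z \ell)$. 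For~\eqref{eq:Hgatinfboundary}, I would divide numerator and denominator by $\vv s$ and pass to the limit, keeping only the $\vv s$-proportional terms:
\begin{equation*}
  \lim_{\vv \to \infty} H(s)
  = \frac{2 \sinh(z (\ell - \pp)) \sinh^2(\tfrac{1}{2} z \pp)}{z \ell \sinh(z \pp) \sinh(z (\ell - \pp))}
  = \frac{2 \sinh^2(\tfrac{1}{2} z \pp)}{z \ell \sinh(z \pp)},
\end{equation*}
and the same double-angle identity (with $\pp$ in place of $\ell$) yields $\tanh(\tfrac{1}{2} z \pp)/(z \ell)$.

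The main obstacle is the $s \to 0$ limit, where one must carefully track the orders in $z$ of the various summands in $\beta_1$ and $\eta$ (and recall the relation $s = O(z^2)$) to avoid a spurious $0/0$ cancellation; this is strictly easier than the analogous step in \Cref{thm:uniform-output-limits} since no fifth-order expansion of $z \ell \eta - \gamma$ is required. The other two limits are straightforward algebraic simplifications of a quotient of products of hyperbolic sines, reducing via a single double-angle identity to the stated $\tanh$-based expressions.
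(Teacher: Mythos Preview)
Your proposal is correct and follows essentially the same route as the paper: for~\eqref{eq:Hgat0boundary} and~\eqref{eq:Hgatinfboundary} your argument coincides with the paper's line by line, and for~\eqref{eq:Hsat0boundary} the paper simply rewrites $H(s)$ as $\tfrac{2}{\ell}\cdot\tfrac{\sinh(\frac{1}{2}z\ell)}{z}\cdot\tfrac{\sinh(\frac{1}{2}z\ell)}{\sinh(z\ell)}$ and lets each factor tend to its limit, which is the same computation as your order-tracking argument (your version being if anything slightly more explicit about why the $\vv s$ terms are subleading).
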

\begin{proof}
  Inserting the definitions of $\beta_1$ and $\eta$
  from~\eqref{eq:variables} into~\eqref{eq:Hboundary},
  we obtain
  \begin{align*}
    \lim_{s \to 0} H(s)
    =
    \lim_{s \to 0}
    \frac{2}{\ell}
    \cdot
    \frac{{\sinh\myparen*{\frac{1}{2}z\ell}}}{z}
    \cdot
    \frac{{\sinh\myparen*{\frac{1}{2}z\ell}}}{\sinh\myparen*{z\ell}}
    =
    \frac{1}{2},
  \end{align*}
  proving~\eqref{eq:Hsat0boundary}.
  Similarly, in the case of no external damping ($\vv = 0$), $\pp \to 0$, or
  $\pp \to \ell$, we directly obtain from~\eqref{eq:Hboundary} that
  \begin{align*}
    H(s)
     & =
    \frac{
      2 k z \sinh^2\myparen*{\tfrac{1}{2} z \ell}
    }{
      k z^2 \ell \sinh(z \ell)
    }
    =
    \frac{
      2
      \sinh^2\myparen*{\tfrac{1}{2} z \ell}
    }{
      2 z \ell
      \sinh\myparen*{\tfrac{1}{2} z \ell}
      \cosh\myparen*{\tfrac{1}{2} z \ell}
    }
    =
    \frac{\tanh\myparen*{\tfrac{1}{2} z \ell}}{z \ell},
  \end{align*}
  which proves~\eqref{eq:Hgat0boundary}. Finally,
  the infinite viscosity ($\vv \to \infty$) in~\eqref{eq:Hboundary} yields
  \begin{align*}
    H(s)
     & =
    \frac{
      2 s
      \sinh^2\myparen*{\tfrac{1}{2} z \pp}
      \sinh(z (\ell - \pp))
    }{
      z \ell s
      \sinh(z \pp)
      \sinh(z (\ell - \pp))
    }
    =
    \frac{
      2
      \sinh^2\myparen*{\tfrac{1}{2} z \pp}
    }{
      2 z \ell
      \sinh\myparen*{\tfrac{1}{2} z \pp}
      \cosh\myparen*{\tfrac{1}{2} z \pp}
    }
    =
    \frac{\tanh\myparen*{\tfrac{1}{2} z \pp}}{z \ell},
  \end{align*}
  completing the proof.
\end{proof}

\begin{remark}
  \Cref{thm:boundary-output-limits} proves that at zero frequency,
  in the boundary forcing case,
  the transfer function value is independent of the string parameters
  $\ell$ and $k$.
  This is in contrast to the uniform case forcing case
  shown in \Cref{thm:uniform-output-limits}
  where the transfer function value at zero frequency
  depends on these parameters.
  Specifically, in terms of the stiffness $k$,
  while in the boundary forcing case
  this limit does not depend on $k$,
  in the uniform forcing case,
  systems with a smaller stiffness $k$
  have larger transfer function values at the origin.

\end{remark}

\subsection{Numerical Results}%
\label{sec:boundaryresults}

As we did in~\Cref{sec:uniformresults} for the uniform forcing case,
we now illustrate the theoretical analysis for the boundary forcing case
numerically.
We choose the same string parameters, that is,
$\ell = 10$, $d = 0.08$, and $k = 1$,
leading to the results
\Cref{fig:boundary-h-g,fig:boundary-h-p,fig:boundary-hinf,fig:boundary-h2,%
  fig:boundary-hinf-comp,fig:boundary-h2-comp}.
Before we get into specific results,
we immediately note that despite choosing the same string parameters,
the plots and results for the boundary forcing case are significantly different
from those for the uniform forcing case.
This analysis and comparison were only possible due to the novel explicit
transfer function formulation we derived in this paper.

More specifically, \Cref{fig:boundary-h-g,fig:boundary-h-p} illustrate the
output transfer function $H$ for different damping positions and different
viscosities.
Compared to the uniform case results shown
in~\Cref{fig:uniform-h-g,fig:uniform-h-p},
we observe even denser peaks that change under parameter variation.
We again observe a fixed value at the zero frequency,
demonstrating~\eqref{eq:Hsat0boundary}.
Furthermore, we note that for some choice of $\pp$ and $\vv$,
the largest value of $H(\imag \omega)$ is achieved at the zero frequency,
from which we can expect that the $\Hinf$ norm will be minimized over a region
in a range of $\pp$ and $\vv$.

\begin{figure}[tb]
  \centering
  \tikzexternalenable%
  \tikzsetnextfilename{boundary-h-g}
  \begin{tikzpicture}
    \footnotesize
    \begin{axis}[
        grid,
        xlabel={Frequency $\omega$ [rad/s]},
        ylabel={Magnitude $\abs{H(\imag \omega)}$},
        xmode=log,
        ymode=log,
        cycle list name=mpl,
        legend entries={$\pp = 5$, $\pp = 3$, $\pp = 1$},
      ]
      \addplot table [x=w, y=H1] {fig/boundary_h_g.txt};
      \addplot table [x=w, y=H2] {fig/boundary_h_g.txt};
      \addplot table [x=w, y=H3] {fig/boundary_h_g.txt};
    \end{axis}
  \end{tikzpicture}
  \tikzexternaldisable%
  \caption{Magnitude plot of $H$ in the case of boundary forcing
    for fixed damper viscosity $\vv = 2$ and varying damping position $\pp$
    ($\ell = 10$, $d = 0.08$, $k = 1$)}%
  \label{fig:boundary-h-g}
\end{figure}
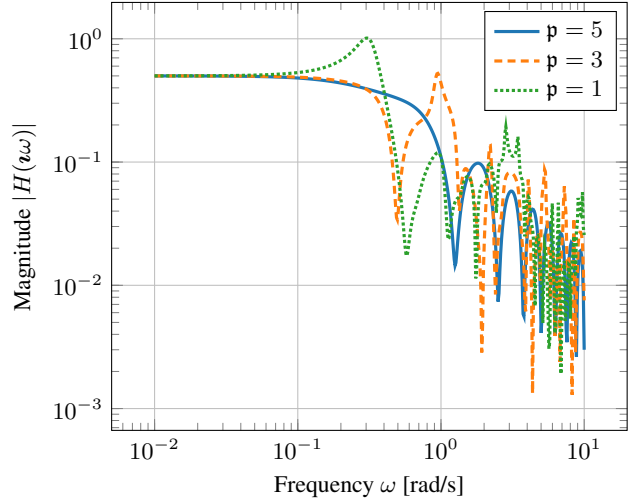

\begin{figure}[tb]
  \centering
  \tikzexternalenable%
  \tikzsetnextfilename{boundary-h-p}
  \begin{tikzpicture}
    \footnotesize
    \begin{axis}[
        grid,
        xlabel={Frequency $\omega$ [rad/s]},
        ylabel={Magnitude $\abs{H(\imag \omega)}$},
        xmode=log,
        ymode=log,
        cycle list name=mpl,
        legend entries={$\vv = 0$, $\vv = 1$, $\vv = 10$, $\vv = 100$},
      ]
      \addplot table [x=w, y=H1] {fig/boundary_h_p.txt};
      \addplot table [x=w, y=H2] {fig/boundary_h_p.txt};
      \addplot table [x=w, y=H3] {fig/boundary_h_p.txt};
      \addplot table [x=w, y=H4] {fig/boundary_h_p.txt};
    \end{axis}
  \end{tikzpicture}
  \tikzexternaldisable%
  \caption{Magnitude plot of $H$ in the case of boundary forcing
    for fixed damping position $\pp = 0.5$ and varying damper viscosity $\vv$
    ($\ell = 10$, $d = 0.08$, $k = 1$)}%
  \label{fig:boundary-h-p}
\end{figure}
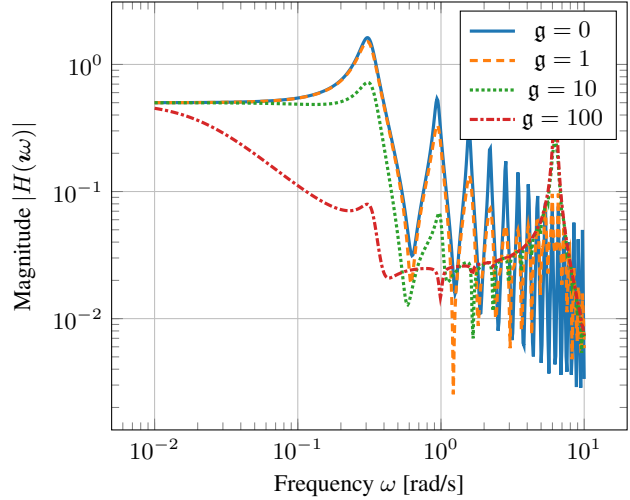

Similarly, as in \Cref{sec:uniformresults},
we show a contour plot of the $\Htwo$ and $\Hinf$ norm of $H$,
in \Cref{fig:boundary-hinf,fig:boundary-h2},
to illustrate how $H$ vary with respect to the position $\pp$ and the viscosity
$\vv$.
Here, \Cref{fig:boundary-hinf,fig:boundary-h2} illustrate the locations of
local minima and suggest that the locations of local minima and
saddle points tend to align with integer fractions of the string length, i.e.,
$\ell/2, \ell/3, \ell/4$, and so on.
Furthermore, these figures illustrate the behavior of the {optimal}
viscosity for the fixed position varies in such a way that when the damper is
closer to the forcing source,
the optimal viscosity is larger,
which is also expected from a physical perspective.

\begin{figure}[tb]
  \centering
  \tikzexternalenable%
  \tikzsetnextfilename{boundary-hinf}
  \begin{tikzpicture}
    \footnotesize
    \begin{axis}[
        enlargelimits=false,
        axis on top,
        ymode=log,
        xlabel={Position $\pp$},
        ylabel={Viscosity $\vv$},
        point meta min=0.49999999999999994,
        point meta max=1.625482043085496,
        colorbar,
        colormap/viridis,
      ]
      \addplot graphics [
          xmin=0,
          xmax=10,
          ymin=1e-1,
          ymax=1e2,
        ] {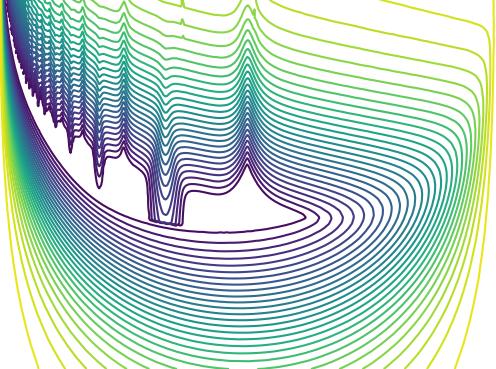};
    \end{axis}
  \end{tikzpicture}
  \tikzexternaldisable%
  \caption{$\Hinf$ norm of $H$ in the case of boundary forcing
    for different damping positions and viscosities
    ($\ell = 10$, $d = 0.08$, $k = 1$)}%
  \label{fig:boundary-hinf}
\end{figure}

\begin{figure}[tb]
  \centering
  \tikzexternalenable%
  \tikzsetnextfilename{boundary-h2}
  \begin{tikzpicture}
    \footnotesize
    \begin{axis}[
        enlargelimits=false,
        axis on top,
        ymode=log,
        xlabel={Position $\pp$},
        ylabel={Viscosity $\vv$},
        point meta min=0.20429860782436216,
        point meta max=0.3558612282088293,
        colorbar,
        colormap/viridis,
      ]
      \addplot graphics [
          xmin=0,
          xmax=10,
          ymin=1e-1,
          ymax=1e2,
        ] {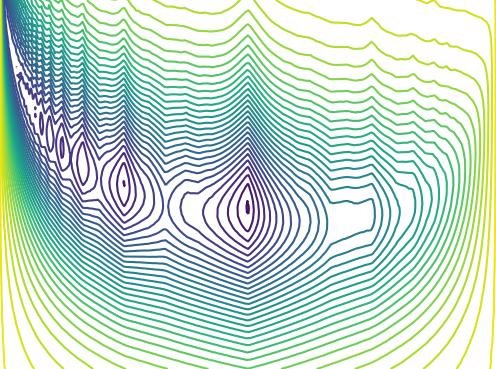};
    \end{axis}
  \end{tikzpicture}
  \tikzexternaldisable%
  \caption{$\Htwo$ norm of $H$ in the case of boundary forcing
    for different damping positions and viscosities
    ($\ell = 10$, $d = 0.08$, $k = 1$)}%
  \label{fig:boundary-h2}
\end{figure}

A comparison of the discrete and continuous cases is given in
\Cref{fig:boundary-hinf-comp,fig:boundary-h2-comp}.
{In contrast to the uniform forcing case,
which yields a unique region where the objective function is minimized,
the boundary forcing scenario results in multiple local minima.
This highlights the advantages of the continuous representation of the
position $\pp$,
particularly in both the $\Htwo$ and $\Hinf$ norms,
as the discrete model only approximates the objective function and may fail to
capture such detailed behavior.}

\begin{figure}[tb]
  \centering
  \tikzexternalenable%
  \tikzsetnextfilename{boundary-hinf-comp}
  \begin{tikzpicture}
    \footnotesize
    \begin{groupplot}[
        group style={
            group size=2 by 1,
            xlabels at=edge bottom,
            ylabels at=edge left,
            xticklabels at=edge bottom,
            yticklabels at=edge left,
            horizontal sep=3ex,
          },
        height=0.4\linewidth,
        width=0.48\linewidth,
        enlargelimits=false,
        axis on top,
        ymode=log,
        xlabel={Position $\pp$},
        ylabel={Viscosity $\vv$},
      ]
      \nextgroupplot[
        title={Continuous},
      ]
      \addplot graphics [
          xmin=0,
          xmax=10,
          ymin=1e-1,
          ymax=1e2,
        ] {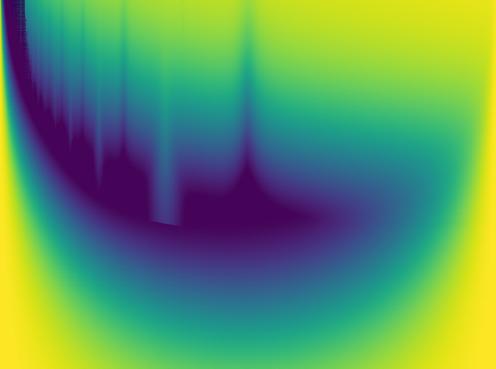};
      \nextgroupplot[
        title={Discrete},
        point meta min=0.4874999999999779,
        point meta max=1.625482043085496,
        colorbar,
        colormap/viridis,
      ]
      \addplot graphics [
          xmin=0,
          xmax=10,
          ymin=1e-1,
          ymax=1e2,
        ] {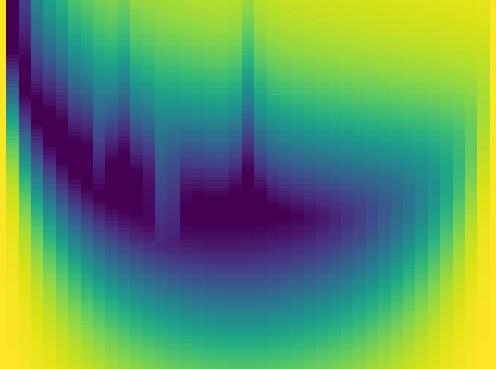};
    \end{groupplot}
  \end{tikzpicture}
  \tikzexternaldisable%
  \caption{Comparison of $\Hinf$ norm of $H$ in the case of boundary forcing
    for the continuous and discretized problem
    ($\ell = 10$, $d = 0.08$, $k = 1$)}%
  \label{fig:boundary-hinf-comp}
\end{figure}

\begin{figure}[tb]
  \centering
  \tikzexternalenable%
  \tikzsetnextfilename{boundary-h2-comp}
  \begin{tikzpicture}
    \footnotesize
    \begin{groupplot}[
        group style={
            group size=2 by 1,
            xlabels at=edge bottom,
            ylabels at=edge left,
            xticklabels at=edge bottom,
            yticklabels at=edge left,
            horizontal sep=3ex,
          },
        height=0.4\linewidth,
        width=0.48\linewidth,
        enlargelimits=false,
        axis on top,
        ymode=log,
        xlabel={Position $\pp$},
        ylabel={Viscosity $\vv$},
      ]
      \nextgroupplot[
        title={Continuous},
      ]
      \addplot graphics [
          xmin=0,
          xmax=10,
          ymin=1e-1,
          ymax=1e2,
        ] {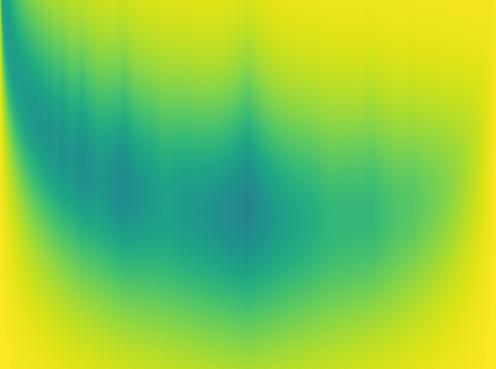};
      \nextgroupplot[
        title={Discrete},
        point meta min=0.07822890054609964,
        point meta max=0.3558612282088293,
        colorbar,
        colormap/viridis,
      ]
      \addplot graphics [
          xmin=0,
          xmax=10,
          ymin=1e-1,
          ymax=1e2,
        ] {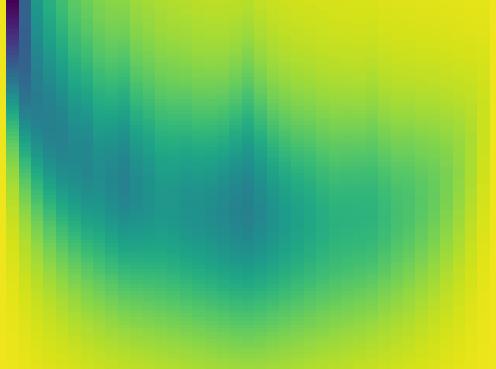};
    \end{groupplot}
  \end{tikzpicture}
  \tikzexternaldisable%
  \caption{Comparison of $\Htwo$ norm of $H$ in the case of boundary forcing
    for the continuous and discretized problem
    ($\ell = 10$, $d = 0.08$, $k = 1$)}%
  \label{fig:boundary-h2-comp}
\end{figure}

\section{Conclusions}%
\label{sec:conc}

We presented  a novel model for the damped wave equation with one damper of
viscosity $\vv$ at position $\pp$ and
developed a system-theoretic analysis in the frequency domain.
Two forcing strategies were considered from the input-output modeling
perspective: uniform and boundary forcing.
In both scenarios, the average displacement was taken as the output and
the formulae for the corresponding transfer function were derived
which explicitly illustrated the dependence on the damping parameters $\pp$ and
$\vv$.
Then, using this explicit parametrization,
we studied the optimal damping problem
by analyzing the impact of variation in the damping parameters
on the system-theoretic $\Htwo$ and $\Hinf$ optimization criteria.
We also analyzed the limiting cases,
which provide direct insight into the system's behavior
when viscosity is very large or when there is no external damping.
In addition, by treating the damping position as a continuous variable,
our model offers improved insight into the optimal position
from both numerical and physical perspectives,
compared to discrete approaches.
The impact of simultaneous variation of position and viscosity parameters is
illustrated in numerical experiments.

\bibliographystyle{alphaurl}
\bibliography{my}
\addcontentsline{toc}{chapter}{References}
\end{document}